\documentclass[onefignum,onetabnum]{siamart251216}

\usepackage{lipsum}
\usepackage{amsfonts}
\usepackage{graphicx}
\usepackage{epstopdf}
\usepackage{algorithmic}

\def\N{\mathbb N}

\def\R{\mathbb R}

\def\R{\mathbb{R}}
\def\N{\mathbb{N}}

\def\cB{\mathcal{B}}

\def\cI{\mathcal{I}}

\def\cO{\mathcal{O}}

\def\aa{\boldsymbol{a}}

\def\ee{\boldsymbol{e}}

\def\rr{\boldsymbol{r}}

\def\tt{\boldsymbol{t}}

\def\xx{\boldsymbol{x}}
\def\yy{\boldsymbol{y}}

\def\11{\mathbf{1}}

\def\00{\boldsymbol{0}}

\def\aalpha{\boldsymbol{\alpha}}

\def\oomega{\boldsymbol{\omega}}

\def\fraku{\mathfrak{u}}

\def\transpose{\text{T}}

\ifpdf
  \DeclareGraphicsExtensions{.eps,.pdf,.png,.jpg}
\else
  \DeclareGraphicsExtensions{.eps}
\fi

\newsiamremark{remark}{Remark}
\newsiamremark{hypothesis}{Hypothesis}
\crefname{hypothesis}{Hypothesis}{Hypotheses}
\newsiamthm{claim}{Claim}

\headers{Nodal Representations for Kernel-Based Multilevel
  Interpolation}{L. Gollwitzer, R. Kempf, and H. Wendland} 

\title{Nodal Representations for Kernel-Based Multilevel Interpolation\thanks{Submitted to the editors \today.
\funding{The work of L. Gollwitzer and H. Wendland was funded by the Deutsche Forschungsgemeinschaft (DFG, Germand Research Foundation) - Project Number 452806809.}}}

\author{Lorenz Gollwitzer \thanks{University of Bayreuth, Germany 
  (\email{lorenz.gollwitzer@uni-bayreuth.de}).}
\and R\"udiger Kempf \thanks{University of Bayreuth, Germany
  (\email{ruediger.kempf@uni-bayreuth.de}).}
\and Holger Wendland  \thanks{University of Bayreuth, Germany 
  (\email{holger.wendland@uni-bayreuth.de}).}
}
\usepackage{amsopn}

\makeatletter
\newcommand*{\addFileDependency}[1]{
  \typeout{(#1)}
  \@addtofilelist{#1}
  \IfFileExists{#1}{}{\typeout{No file #1.}}
}
\makeatother

\ifpdf
\hypersetup{
  pdftitle={Nodal Representations for Kernel-Based Multilevel Interpolation},
  pdfauthor={L. Gollwitzer, R. Kempf, and H. Wendland}
}
\fi

\usepackage{xcolor}

\begin{document}

\maketitle

\begin{abstract}
We study the kernel-based multilevel method for approximating or
learning a multivariate function from scattered data, motivated in
part by recent applications in sparse 
grid methods. For nested families of point sets, we derive a
 nodal representation of the multilevel interpolant that
depends explicitly on the values of the target function. This representation
allows us to characterize the range of the associated interpolation
operator and to construct a cardinal basis for this space. We prove
that the resulting basis functions exhibit exponential decay,
analogous to the localization properties known for certain kernel-based
Lagrange functions. We further analyze the computational cost of
the resulting formulation. For non-nested families of point sets,
we derive a generalized nodal representation.
\end{abstract}

\begin{keywords}
  Radial Basis Functions, multilevel method, Lagrange functions
\end{keywords}

\begin{AMS}
  46E35, 65D05 65D12, 65D15, 65J10
\end{AMS}

\section{Introduction}
\label{kempf:sec:1}

Kernel-based methods have become a well-established tool in scattered
data approximation \cite{Buhmann-03-1,Fasshauer-07-1,Wendland-05-1},
meshless numerical methods for partial differential equations
\cite{Fornberg-Flyer-15-1,Franke-Schaback-98-1,Wendland-99-2} and machine learning
\cite{Cristianini-Shawe-Taylor-00-1,Cucker-Zhou-07-1,Schoelkopf-Smola-02-1,
  Steinwart-Christmann-08-1}. For
a  broader overview see also \cite{Schaback-Wendland-06-1}. 
They are particularly valued for their flexibility, stability and
deterministic error analysis
\cite{Narcowich-etal-05-1,Narcowich-etal-06-1,Schaback-95-2}.   

A major difficulty arises when the number of data becomes large:
the kernel interpolation matrix may become severely ill-conditioned
\cite{Schaback-95-2}. A common remedy is to introduce a scaling
parameter and decrease the correlation length if the number of
points grows. If chosen appropriately, this leads to
well-conditioned interpolation matrices. However, then,
convergence of the approximation method is lost. This phenomenon is
known as the \emph{trade-off principle}: either the approximation
method converges but the system matrix becomes ill-conditioned, or the
system matrix is well-conditioned but convergence is no longer
guaranteed \cite{Schaback-95-2, Schaback-97-3}.  

For compactly supported kernels, ideas have been developed to overcome
this trade-off principle by representing multiple scales within the
data \cite{Floater-Iske-96-1,Schaback-97-3}. The algorithm follows a residual
correction strategy. First, an approximation is computed on a coarse
set of data sites using a large kernel support, capturing large-scale
features of the target function. The residual between the target
function and this approximation typically contains finer-scale
information. This residual is then approximated on a finer set of
points using a smaller kernel support. The procedure is iterated
until the desired accuracy is reached. In the interpolation setting,
the sum of the levelwise interpolants is again an interpolant of the
target function on the finest set of points. Although the
procedure was established early and was shown to converge numerically,
theoretical results were established much later in
\cite{LeGia-etal-10-1,Wendland-10-1}. This method was also studied in
\cite{Avesani-etal-25-1, Hales-Levesley-02-1,Narcowich-etal-99-1, Usta-Levesley-18-1} for
globally supported kernels and was used for solving partial
differential equations in, for example, \cite{Chen-etal-02-1,
  LeGia-etal-12-1,Fasshauer-99-1, Wendland-18-1}.

Recently, this multilevel approach has been combined with Smolyak's
construction to obtain kernel methods for high-dimensional function
reconstruction \cite{Buettner-etal-25-1, Georgoulis-etal-13-1, Griebel-etal-26-1, Kempf-Wendland-23-1}. In this setting the
multilevel operator is tensorized, which requires an explicit
representation of its dependence on the data. However, the standard
formulation expresses the operator implicitly through recursively
defined residuals. Although first results in this direction were
obtained in the aforementioned works, the resulting formulations are
computationally infeasible in practice. 
In this paper we derive a nodal representation of the multilevel
approximation operator that makes its dependence on the data
explicit. This representation yields new structural insights into the
global approximation space. In particular, for nested families of
point sets we construct both a Lagrange basis and a Newton-type
basis of the multiscale space. These bases allow us to characterize
the range of the multilevel operator and to analyze localization
properties and computational cost.

The analysis is more involved for non-nested sets of data
sites. Nevertheless, the ideas developed for the nested case can still
be used to construct a basis of the global approximation space and to
characterize the range of the multilevel operator. In general, this
basis is no longer cardinal. Such non-nested configurations arise
naturally in adaptive variants of multilevel methods. 

These results provide a new structural understanding of multilevel
kernel methods and lead to practical representations of the
approximation operator that are suitable for high-dimensional
constructions. 

The remainder of the paper is organized as
follows. \Cref{sec:MultilevelMethod} introduces notation and reviews
the multilevel approximation method. We also extend the convergence
result of \cite{Wendland-10-1} to a broader range of Sobolev
norms. \Cref{sec:NestedSites} studies nested families of data sites
and derives Lagrange and Newton-type bases for the global
approximation space. Exponential decay of the Lagrange basis functions
is established in
\Cref{subsec:ExponentialDecay}. \Cref{sec:NonNestedSites} treats the
non-nested case. \Cref{sec:Conclusion} provides a
conclusion and outlook.


\section{The Multilevel Method}\label{sec:MultilevelMethod}

The setup of the kernel-based multilevel method is 
usually the following one, cf. \cite{Wendland-10-1}. For
a bounded domain $ \Omega \subseteq \R^d $ we assume 
that we are given a sequence of finite point sets $ X_1, X_2, 
\ldots \subseteq \Omega $, where 
$
 X_\ell = \left\{ \xx_1^{(\ell)}, \dots, \xx^{(\ell)}_{N_\ell} \right\}.
$

Associated to the set $ X_\ell $, $\ell\in\N$, we define the 
\emph{fill distance} $h_\ell$ and {\em separation radius} $q_\ell$ as usual
by
\[
 h_\ell := h_{X_\ell,\Omega} = \sup_{\xx \in \Omega}
 \min_{1\le i\le N_\ell} \| \xx - \xx^{(\ell)}_{i} \|_2, \qquad
 q_\ell := q_{X_\ell} = \frac{1}{2}
 \min_{i \neq j}\| \xx^{(\ell)}_i - \xx^{(\ell)}_j \|_2.
\]
For some results it is important that the data sites are
\emph{quasi-uniform}, that is there exists a  
level-independent constant $ c_{qu}>0 $ such that 
\[
 q_\ell \leq h_\ell \leq c_{qu} q_\ell, \qquad \ell \in \N.
\]
The sets $ X_\ell$ are not required to be nested, however,
we assume that the fill distances  
decay uniformly, i.e., that there is a uniform \emph{refinement 
parameter} $ 0 < \mu < 1 $ and a constant $ c \in (0,1] $,
such that
\[
 c \mu h_\ell \leq h_{\ell+1} \leq \mu h_\ell, \qquad \ell \in \N.  
\]

The second important ingredient for defining our method is a compactly
supported, positive definite radial basis function (RBF) $\Phi: \R^d \to \R$ with
support in the closed unit ball.  Using  \emph{scaling parameter} $
\delta_\ell := \nu h_\ell $ with an $\ell$-independent $\nu>1$, we 
obtain a basis function for every level $ \ell \in \N $, given as
\begin{equation}\label{eq:RescaledKernel}
 \Phi_\ell := \Phi_{\delta_\ell} := \delta^{-d} \Phi(\cdot  / \delta_\ell), \quad \ell \in \N.
\end{equation} 

Noting that $\Phi_\ell$ has support in the closed ball about zero with
radius $\delta_\ell$, we have the \emph{local approximation spaces}
\[
 W_\ell := 
\operatorname{span} \{ \Phi_\ell
(\cdot - \xx^{(\ell)}_{i}) \; : \;  1\le i\le N_\ell \},
\]
and, for $L\in N$,  \emph{global approximation spaces}
\begin{equation}\label{eq:DefApproximationSpace}
V_L=W_1+\cdots +W_L.
\end{equation}
Under the above assumption, it can be shown that 
the sum in \eqref{eq:DefApproximationSpace} is direct, see
\cite{LeGia-etal-10-1}. Moreover, we can give an alternative basis for
these spaces. To be more
precise for every level $\ell$ and every $1 \le i \le N_\ell$ there
are functions $\chi_i^{(\ell)}\in W_\ell$ satisfying
$\chi_i^{(\ell)}(\xx_j^{(\ell)})=\delta_{ij}$. They are given  by
$\left(\chi^{(\ell)}_{1}, \ldots, \chi^{(\ell)}_{N_\ell}
 \right) = \rr_\ell^{\transpose} M_\ell^{-1},
$
where $ \rr_\ell =  (\Phi_\ell(\cdot - \xx_{i}^{(\ell)}))_i^{\transpose}: \Omega\to
\R^{N_\ell} $ and  $ M_\ell = (\Phi_\ell(\xx_{i}^{(\ell)} -
\xx_{j}^{(\ell)})_{ij})\in\R^{N_\ell\times N_\ell}$ is the positive definite
kernel matrix.

\begin{definition}
A  basis of the global approximation space  $V_L$ is defined  by
\[
\cB_{\operatorname{full}}:=\{\chi_i^{(\ell)} : 1\le \ell \le L, 1\le i\le
N_\ell\}.
\]
\end{definition}
Obviously, this is indeed a basis of $V_L$, showing that the dimension
of the global approximation space $V_L$ is $N_1+\cdots + N_L$. In the
case of nested data sets $X_\ell\subseteq X_{\ell+1}$ the basis
$\cB_{\operatorname{full}}$ is  a Newton-type basis,
as we have for any $1\le \ell\le L$ that $\chi_i^{(\ell)}(\xx_j^{(m)})=0$ for
all points from $X_m$, $1\le m\le \ell$ but not for the later added points from
$X_{\ell+1}$ to $X_L$, i.e. it satisfies the Newton property of a basis
{\em batch-wise}.

It is well-known that the local approximation
spaces $W_\ell$ are not rich enough and that the global spaces $V_L$ are
redundant. Hence, it is important to properly choose a
good subspace of $V_L$, which is still rich enough but less
redundant. We will achieve this by taking the image of a multilevel
method, which are describing now.

The \emph{ kernel-based multilevel method} depicts one way of successively
choosing local approximants $s_\ell\in W_\ell$ to form a global approximant
$f_\ell=s_1+\cdots+s_\ell\in V_\ell$. To be more precise, the method starts
with an initial approximation $f_0=0$ and an initial error $e_0=f$ and
then proceeds for $\ell\in\N$ by computing a local approximation
$s_\ell\in W_\ell$ to the error $e_{\ell-1}$. Then, the
global approximation and the error are updated via
\begin{align*}
  f_\ell & = f_{\ell-1}+s_\ell,\\
  e_\ell &= e_{\ell-1}-s_\ell.
\end{align*}
Numerically, the procedure stops after, say, $L$ levels, but for
theoretical results we may assume that $L$ tends to infinity.

In this paper, we are particularly interested in interpolation as the
means of determining the local approximation $s_\ell\in W_\ell$. Thus, we
set $s_\ell=I_{X_\ell,\Phi_\ell} (e_{\ell-1})$ for any $\ell \in \N $, where the
\emph{interpolation operator} $ I_{X_\ell, \Phi_\ell}: C(\Omega) \to W_\ell $
 is defined by 
\[
  I_{X_\ell,\Phi_\ell}(g) = \sum_{i=1}^{N_\ell} g(\xx_i^{(\ell)}) \chi_i^{(\ell)}, \qquad g \in C(\Omega).
\]
 
 \begin{definition}\label{def:MultilevelOperator}
With the notation and assumptions made above, the
\emph{(interpolatory) multilevel operator} $ A_L: C(\Omega) \to  V_L $
is defined by
\[
  A_L(f) := \sum_{\ell=1}^{L} I_{X_\ell,\Phi_\ell}(e_{\ell-1}).
  \]
  Its image will be denoted by $\widetilde{V}_L:=A_L(C(\Omega))$. 
\end{definition}

The following simple observation will be crucial in our analysis. 
\begin{lemma}
The operator $A_L$  is linear and, for any $f\in C(\Omega)$, $A_L f$
interpolates $ f $ on $ X_L $,  i.e. $ A_L(f)(\xx) = f(\xx) $ for $
\xx \in X_L $. 
\end{lemma}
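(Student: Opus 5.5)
We will prove both claims by induction on the level, using only the recursive definition of the residuals $e_\ell$ and the fact that each local operator $I_{X_\ell,\Phi_\ell}$ is a linear interpolation operator.

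\emph{Linearity.} We will show by induction on $\ell$ that the maps $f\mapsto e_\ell$ and $f\mapsto f_\ell$ are linear from $C(\Omega)$ to $C(\Omega)$. For $\ell=0$ this is clear, since $e_0=f$ and $f_0=0$. Assume $f\mapsto e_{\ell-1}$ is linear. The operator $I_{X_\ell,\Phi_\ell}(g)=\sum_i g(\xx_i^{(\ell)})\chi_i^{(\ell)}$ is linear in $g$, because point evaluations are linear functionals. Hence $s_\ell=I_{X_\ell,\Phi_\ell}(e_{\ell-1})$ is a composition of linear maps and so is linear in $f$. Then $e_\ell=e_{\ell-1}-s_\ell$ and $f_\ell=f_{\ell-1}+s_\ell$ are linear as well. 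Finally $A_L(f)=\sum_{\ell=1}^L s_\ell$ is a finite sum of linear maps, so $A_L$ is linear. Note that every $s_\ell$ lies in $W_\ell\subseteq C(\Omega)$, since $\Phi$ is continuous, so all residuals remain in $C(\Omega)$ and the recursion is well defined.

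\emph{Interpolation on $X_L$.} A telescoping argument gives $e_\ell=f-\sum_{k=1}^{\ell}s_k=f-f_\ell$ for every $\ell$, and in particular $f-A_L(f)=e_L$. It therefore suffices to show that $e_L$ vanishes on $X_L$. Since $\chi_i^{(L)}(\xx_j^{(L)})=\delta_{ij}$, we have
\[
s_L(\xx_j^{(L)})=I_{X_L,\Phi_L}(e_{L-1})(\xx_j^{(L)})=e_{L-1}(\xx_j^{(L)}), \qquad 1\le j\le N_L.
\]
Consequently $e_L(\xx_j^{(L)})=e_{L-1}(\xx_j^{(L)})-s_L(\xx_j^{(L)})=0$, which gives $A_L(f)(\xx)=f(\xx)$ for all $\xx\in X_L$.

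Both parts are routine and no step is a real obstacle. The only points needing care are the following. The cardinal property of the $\chi_i^{(L)}$ requires the kernel matrix $M_L$ to be invertible, which holds because $\Phi$ is positive definite and the points of $X_L$ are distinct. The claim concerns only the finest level $X_L$: for non-nested sets nothing is asserted on $X_\ell$ with $\ell<L$.
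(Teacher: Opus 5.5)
Your proof is correct and follows the same route as the paper: linearity by induction over the levels using that compositions and finite sums of linear maps are linear, and the interpolation property via the telescoping identity $e_L = f - A_L(f)$ together with the fact that $s_L$ interpolates $e_{L-1}$ on $X_L$. The paper only sketches the first part and defers the second to \cite{Wendland-05-1}, where the argument is the same as yours.
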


\begin{proof}
 Linearity of the operator follows from an easy induction
 on the number of levels and the observation that the 
 concatenation of linear operators is linear. The proof 
 of the interpolation 
 property can be found in, e.g., \cite{Wendland-05-1}. 
\end{proof}

We end this section by extending the error result of \cite[Theorem 1]{Wendland-10-1}
to more general Sobolev norms. While this is obviously a
straight-forward generalization, it seems that it has not been stated
before and is important for itself. It also allows us to specify the
choice of the RBF $\Phi$ in more details. 

A Hilbert space $ H$ of functions $ f: \Omega \to \R $ is called a
\emph{reproducing kernel Hilbert space (RKHS)} if there is a kernel $
K: \Omega \times \Omega \to \R $ such that $ K(\cdot, \xx) \in H $ and
$ f(\xx) = \langle f, K(\cdot, \xx) \rangle_H $ for all $ \xx \in
\Omega $. $ K $ is then called the \emph{reproducing kernel} of $ H
$. If $\Omega\subseteq\R^d$ has a Lipschitz boundary or if
$\Omega=\R^d$ then the Sobolev space $H^\sigma(\Omega)$ with
$\sigma>d/2$ is a RKHS. If the Fourier transform of the RBF $ \Phi: \R^d \to \R $
satisfies  
\begin{equation}\label{eq:AlgebraicDecay}
 c_1(1+ \| \oomega \|_2^2)^{-\sigma} \leq \widehat{\Phi}
 (\oomega) \leq c_2(1+ \| \oomega \|_2^2)^{-\sigma}, \qquad \oomega \in \R^d,
\end{equation}
with constants $ c_1, c_2 > 0 $, then $ K: \R^d \times \R^d \to \R $
defined by $ K(\xx,\yy) := \Phi(\xx - \yy) $ is the reproducing kernel
of $ H^{\sigma}(\R^d) $ if equipped with the norm  
\begin{equation}\label{eq:NativeSpaceNorm}
 \| f \|_{\Phi} := \int_{\R^d} \frac{ |
   \widehat{f}(\oomega)|^2}{\widehat{\Phi}(\oomega)} \ d \oomega. 
\end{equation}
It is easy to see that this norm is equivalent to the  usual norm on
$H^{\sigma}(\R^d) $. Furthermore, a simple Fourier argument shows that
also our scaled RBFs $\Phi_\ell$ are reproducing kernels of
$H^\sigma(\R^d)$. However, then, the equivalence constants depend on $
\delta_\ell $, i.e. we have $\widetilde{c}_1 \|f\|_{\Phi_{\delta_\ell}} \le
\|f\|_{H^\sigma(\R^d)}\le \widetilde{c}_2 \delta_\ell^{-\sigma}
\|f\|_{\Phi_{\delta_\ell}}$ for all $f\in H^\sigma(\R^d)$. For more
details, we refer to \cite{Wendland-10-1}. 

The next theorem is a sampling inequality, taken from
\cite{Arcangeli-etal-12-1}, which is a slight generalization of the
sampling inequality originally derived in \cite{Narcowich-etal-05-1}.
%

\begin{theorem}\label{thrm:GeneralSamplingInequality}
 Let $ \Omega \subseteq \R^d $ be a bounded Lipschitz domain.
 Let $ q \in [1,\infty] $ and $ \sigma > d/2 $.
 Then there exist constants $ h_0 > 0 $ and $ C > 0 $ such 
 that for all $ X = \{\xx_1, \dots, \xx_N \} \subseteq \Omega $
 with $ h = h_{X, \Omega} \leq h_0 $ and all $ u \in H^{\sigma}
 (\Omega) $ with $ u|_X = 0 $, the bound 
\[
  \| u \|_{W_q^{\tau}(\Omega)} \leq C h^{\sigma - \tau
  - d(\frac{1}{2} - \frac{1}{q})_+} \| u \|_{H^{\sigma}(\Omega)}
\]
 holds for all $ 0 \leq \tau \leq \widetilde{\sigma} $, where $ \widetilde{\sigma} = \widetilde{\sigma}_0
 := \sigma - d (1/2 - 1/q)_+ $ in the case $ \sigma \in \N $ and 
 either $ q > 2 $ and $ \widetilde{\sigma}_0 \in \N $ or $ q = 2 $. Otherwise,
 $ \widetilde{\sigma} = \lceil \widetilde{\sigma}_0 \rceil - 1
 $. Moreover, $ \tau \in \N_0 $ in   the case $ q = \infty $.
\end{theorem}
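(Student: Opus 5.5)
The plan is the standard proof of such sampling inequalities, following \cite{Narcowich-etal-05-1,Arcangeli-etal-12-1}: a local polynomial reproduction argument on small star-shaped pieces, then a covering argument to globalize. Since $\Omega$ is a bounded Lipschitz domain, it satisfies an interior cone condition. I will cover $\Omega$ by a family of subdomains $D_j$ (balls or cones intersected with $\Omega$) of diameter comparable to $\rho = K h$. Each $D_j$ should be star-shaped with respect to a ball of radius comparable to $h$, uniformly in $j$, and every point of $\Omega$ should lie in at most $M$ of the $D_j$, with $M$ independent of $h$. The threshold $h_0$ enters here, since the covering only works once $h$ is small relative to the Lipschitz character of $\partial\Omega$.

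\textbf{Local step.} On each $D_j$, let $m = \lceil\sigma\rceil - 1$ and take the averaged Taylor polynomial $p_j \in \pi_m$ of $u$. The Bramble--Hilbert lemma for domains star-shaped with respect to a ball gives, by a scaling argument (Sobolev embedding on the reference domain, then rescaling to size $\rho$),
\[
\|u - p_j\|_{W_q^\tau(D_j)} \le C \rho^{\sigma-\tau-d(1/2-1/q)} |u|_{H^\sigma(D_j)} .
\]
Here fractional $\sigma$ is handled through Sobolev--Slobodeckij seminorms. The restrictions on $\tau$ in the statement ($\tau\le\widetilde\sigma$, integer $\tau$ when $q=\infty$) are exactly those needed for the embedding $H^{\sigma}\hookrightarrow W^{\tau}_q$ on the reference domain to hold. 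Next I bound $p_j$ itself. Since $u|_X=0$, we have $p_j = p_j - u$ on $X\cap D_j$. Because $D_j$ contains a norming set for $\pi_m$ drawn from $X$ (this needs $K$ large enough and is where $h\le h_0$ is used), we get $\|p_j\|_{L_\infty(D_j)} \le 2\|u-p_j\|_{L_\infty(D_j)}$. Markov's inequality on $D_j$ then converts this into
\[
|p_j|_{W^\tau_q(D_j)} \le C\rho^{-\tau+d/q}\|p_j\|_{L_\infty(D_j)},
\]
and the $L_\infty$ term is bounded by the same local estimate with $q=\infty$, $\tau=0$. Combining the two bounds gives $\|u\|_{W^\tau_q(D_j)} \le C h^{\sigma-\tau-d(1/2-1/q)}|u|_{H^\sigma(D_j)}$, provided $\sigma>d/2$ so that the $L_\infty$ embedding is available.

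\textbf{Global step.} I sum the local bounds to the $q$-th power over $j$. For $q\ge2$ I use $\sum_j a_j^{q/2} \le (\sum_j a_j)^{q/2}$ together with the finite overlap to obtain
\[
\Big(\sum_j |u|_{H^\sigma(D_j)}^q\Big)^{1/q} \le C M^{1/2}\,|u|_{H^\sigma(\Omega)} ,
\]
which yields the exponent $\sigma-\tau-d(1/2-1/q)$. For $q<2$, I first apply Hölder's inequality on each $D_j$ to pass from $W_q^\tau(D_j)$ to $W_2^\tau(D_j)$, using that $|D_j|\sim h^d$. The powers of $h$ then cancel, which produces $(\cdot)_+$ in the exponent. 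Finally, I bound the seminorm by $\|u\|_{H^\sigma(\Omega)}$.

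The main obstacle I expect is the local step for fractional $\sigma$ and the precise $\tau$-range: the Bramble--Hilbert estimate in Slobodeckij seminorms on star-shaped domains, with constants uniform in the shape of $D_j$. I would take this from the Dupont--Scott theory as used in \cite{Arcangeli-etal-12-1}, and cite that paper for the bookkeeping that determines $\widetilde\sigma$.
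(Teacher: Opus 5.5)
The paper gives no proof of this statement; it quotes it from \cite{Arcangeli-etal-12-1}. Your local-polynomial-plus-covering scheme is the standard argument of \cite{Narcowich-etal-05-1} and works when $\tau\in\N_0$. It has a genuine gap for non-integer $\tau$, which the statement allows whenever $q<\infty$.

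Write $\tau=k+s$ with $k\in\N_0$ and $0<s<1$. The top-order part of $\|u\|_{W^\tau_q(\Omega)}$ is the Slobodeckij seminorm $\sum_{|\alpha|=k}\int_\Omega\int_\Omega |D^\alpha u(\xx)-D^\alpha u(\yy)|^q\,\|\xx-\yy\|_2^{-d-sq}\,d\xx\,d\yy$, and this seminorm is nonlocal. The sets $D_j\times D_j$ only cover an $O(h)$-neighbourhood of the diagonal of $\Omega\times\Omega$. Hence $\|u\|^q_{W^\tau_q(\Omega)}\le\sum_j\|u\|^q_{W^\tau_q(D_j)}$ is false in general, and your global step ("sum the local bounds to the $q$-th power") does not go through. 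This, not the local Bramble--Hilbert step (which Dupont--Scott supplies), is what a fractional-order left-hand side adds over \cite{Narcowich-etal-05-1}.

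There are two ways to close the gap.
\begin{itemize}
\item Split the double integral at $\|\xx-\yy\|_2=\varepsilon$ with $\varepsilon\sim h$. The far part is at most $C\varepsilon^{-sq}\|D^\alpha u\|^q_{L_q(\Omega)}$, and the integer case of order $k$ turns this into the correct power $h^{q(\sigma-\tau-d(1/2-1/q)_+)}$. The near part is bounded by $\sum_j|D^\alpha u|^q_{W^s_q(D_j)}$, provided you build the cover so that every pair $\xx,\yy\in\Omega$ with $\|\xx-\yy\|_2<\varepsilon$ lies in a common $D_j$. This Lebesgue-number condition is needed in addition to bounded overlap and uniform star-shapedness.
\item Use the interpolation inequality $\|u\|_{W^{k+s}_q(\Omega)}\le C\|u\|^{1-s}_{W^k_q(\Omega)}\|u\|^{s}_{W^{k+1}_q(\Omega)}$ on Lipschitz domains ($1\le q<\infty$) together with the integer cases $k$ and $k+1$. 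In every case of the statement $\widetilde\sigma\in\N_0$, so $k+1\le\widetilde\sigma$.
\end{itemize}

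Two related problems remain.

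First, for $q<2$ and $\tau\notin\N_0$, Hölder's inequality does not take you from $W^\tau_q(D_j)$ to $W^\tau_2(D_j)$. The weights $\|\xx-\yy\|_2^{-d-sq}$ and $\|\xx-\yy\|_2^{-d-2s}$ differ, and Hölder leaves the divergent factor $\int\int_{D_j\times D_j}\|\xx-\yy\|_2^{-d}\,d\xx\,d\yy$. Indeed $H^s(D)\not\hookrightarrow W^s_q(D)$ for $q<2$ and $s\notin\N$. Instead, apply your local estimate with target $W^\tau_q$ directly; this is admissible because $\tau\le\lceil\sigma\rceil-1<\sigma$ in this range. Then use $\sum_j a_j^{q/2}\le N^{1-q/2}(\sum_j a_j)^{q/2}$ with $N\sim h^{-d}$ pieces.

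Second, your claim that the stated $\tau$-range is exactly what $H^\sigma\hookrightarrow W^\tau_q$ requires is wrong in one case. For $q=\infty$, $d$ even and $\sigma\in\N$, the statement as printed admits $\tau=\sigma-d/2$. There the embedding fails, and so does the inequality itself. To see this, take $u$ with a $\log\log$-type singularity in a derivative of order $\sigma-d/2$, cut off inside a small ball that avoids the finite set $X$. The first alternative in the definition of $\widetilde\sigma$ has to be read with $2<q<\infty$.
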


Now we are in the position to state the easy extension of the convergence result for 
the multilevel method. As in the original result in \cite{Wendland-10-1},
which only gives $L_2$-error bounds, the sequence of data sites $X_\ell$
is not required to be quasi-uniform.

\begin{theorem}\label{thrm:ErrorEstimate}
Let $ \Omega \subseteq \R^d $ be a bounded Lipschitz domain. 
Let $ X_1, X_2, \ldots \subseteq\Omega $ be a sequence of point 
sets in  $ \Omega $ with fill distances $ h_1, h_2, \dots $ satisfying
$ c \mu h_\ell \leq h_{\ell+1} \leq \mu h_\ell $ for $ \ell\in\N$
with fixed $ \mu \in (0,1) $, $ c \in (0,1] $ and $ h_1 $ 
sufficiently small. Let $ \sigma > d/2 $ and $ \Phi $ be the 
reproducing kernel of 
$ H^{\sigma}(\R^d) $, i.e., its Fourier transform satisfies 
\eqref{eq:AlgebraicDecay}, and let $ \Phi_\ell $ be defined by 
\eqref{eq:RescaledKernel} with scale factor $ \delta_\ell = 
\nu h_\ell $. Assume $ 1 /h_1 \geq \nu \geq \gamma / \mu $ with  
a fixed $ \gamma > 0 $.  Let $ q \in [1, \infty] $. Then there exist
constants $ C,C_1 > 0 $ such that for all $ f \in H^{\sigma}(\Omega)
$, with $ \alpha = C_1 \mu^{\sigma    - \tau - d (\frac{1}{2} - \frac{1}{q})_+} $,
 \begin{equation}\label{eq:ErrorEstimate}
  \| f - A_L(f) \|_{W^{\tau}_q(\Omega)} \leq C \alpha^{L} 
  \| f \|_{H^{\sigma}(\Omega)},
  \end{equation}
 holds for all $ 0 \leq \tau \leq \widetilde{\sigma} $, where $
 \widetilde{\sigma} = \widetilde{\sigma}_0 
 := \sigma - d (1/2 - 1/q)_+ $ in the case $ \sigma \in \N $ and 
 either $ q > 2 $ and $ \widetilde{\sigma}_0 \in \N $ or $ q = 2 $. Otherwise,
 $ \widetilde{\sigma} = \lceil \widetilde{\sigma}_0 \rceil - 1 $. Moreover, $ \tau \in \N_0 $ in
 the case $ q = \infty $.
 \end{theorem}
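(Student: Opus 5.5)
We reduce the statement to the $L_2$-type argument of \cite[Theorem 1]{Wendland-10-1}, replacing only its final estimate by the sampling inequality of \Cref{thrm:GeneralSamplingInequality}. Since $A_L f = f_L$, we have $f - A_L(f) = e_L$, and the recursion gives $e_L = e_{L-1} - I_{X_L,\Phi_L}(e_{L-1})$, where $e_L$ vanishes on $X_L$. The proof therefore has two parts. The first is the native-space contraction from \cite{Wendland-10-1}. The second converts the last error into a $W^\tau_q(\Omega)$ bound.

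\textbf{Native-space contraction.} Because $\Omega$ is Lipschitz, we first extend $f$ to $\R^d$ via a bounded extension operator $E: H^\sigma(\Omega)\to H^\sigma(\R^d)$. Set $e_0 = Ef$ and run the recursion on $\R^d$; this does not change the restrictions of the iterates to $\Omega$. The key result of \cite{Wendland-10-1} is
\[
\|e_\ell\|_{\Phi_{\ell+1}} \le C_1 \mu^{\sigma}\, \|e_{\ell-1}\|_{\Phi_\ell},
\]
using $\delta_\ell=\nu h_\ell$ and $\nu\ge \gamma/\mu$. Its proof proceeds in three steps.
\begin{itemize}
\item Split $\|e_\ell\|_{\Phi_{\ell+1}}^2$ into low and high frequencies in Fourier space, using \eqref{eq:AlgebraicDecay} and the scaling of $\widehat{\Phi}_{\delta}$.
\item Bound the low-frequency part by the $L_2(\R^d)$ and $H^\sigma(\R^d)$ norms of $e_\ell$. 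Control the $L_2$ piece by the sampling inequality on $\Omega$ together with a local extension argument.
\item Use that $e_\ell = e_{\ell-1} - I(e_{\ell-1})$ is an orthogonal projection error in the native space of $\Phi_\ell$, so that $\|e_\ell\|_{\Phi_\ell}\le\|e_{\ell-1}\|_{\Phi_\ell}$.
\end{itemize}
We take this estimate from \cite{Wendland-10-1} unchanged, since it does not depend on $q$ or $\tau$. Iterating it gives
\[
\|e_{L-1}\|_{\Phi_L} \le (C_1\mu^\sigma)^{L-1}\|e_0\|_{\Phi_1} \le C (C_1\mu^\sigma)^{L-1}\|f\|_{H^\sigma(\Omega)},
\]
where the last step uses $\|\cdot\|_{\Phi_1}\le \widetilde c_1^{-1}\|\cdot\|_{H^\sigma(\R^d)}$ and the boundedness of $E$.

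\textbf{Final step.} Since $e_L|_{X_L}=0$ and $h_L\le h_1\le h_0$, \Cref{thrm:GeneralSamplingInequality} applies. Set $\beta:=\sigma-\tau-d(\frac12-\frac1q)_+$. Then
\[
\|e_L\|_{W^\tau_q(\Omega)}\le C h_L^{\beta}\|e_L\|_{H^\sigma(\Omega)} \le C h_L^{\beta}\,\widetilde c_2\,\delta_L^{-\sigma}\|e_L\|_{\Phi_L} \le C h_L^{\beta}\delta_L^{-\sigma}\|e_{L-1}\|_{\Phi_L}.
\]
Here the second inequality uses the equivalence constants from \cite{Wendland-10-1}, and the third uses the projection property. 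We then insert $\delta_L=\nu h_L$, the bound $h_L\le \mu^{L-1}h_1$, and $\nu\ge\gamma/\mu$.

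\textbf{Main obstacle: bookkeeping of powers of $h_L$ and $\nu$.} The product $h_L^\beta\,\delta_L^{-\sigma}$ has a mismatch $h_L^{-\tau-d(1/2-1/q)_+}$, so the rates do not simply recombine. To fix this, we do not separate the last step. Instead we run the Fourier splitting of \cite{Wendland-10-1} with the $W^\tau_q$ target: the low-frequency part is bounded via the sampling inequality with exponent $\beta$, and the high-frequency part is bounded directly by $\|e_\ell\|_{\Phi_\ell}$. The per-level factor then becomes $C_1\mu^{\beta}$ rather than $C_1\mu^\sigma$. Iterating this and absorbing the fixed powers of $h_1$ and $\nu$ (which are bounded above via $\nu\le 1/h_1$) into $C$ yields $\alpha^L$ with $\alpha=C_1\mu^{\beta}$.

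The restrictions on $\tau$, namely $\tau\le\widetilde\sigma$ and $\tau\in\N_0$ when $q=\infty$, are inherited verbatim from \Cref{thrm:GeneralSamplingInequality}.
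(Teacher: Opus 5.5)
There is a genuine gap in your final step, the one you call the main obstacle. Your first computation was already the right one. With $s:=\tau+d(\frac12-\frac1q)_+$ it gives
\[
\|e_L\|_{W^\tau_q(\Omega)}\le C\,\nu^{-\sigma}\,h_L^{-s}\,(C_1\mu^\sigma)^{L-1}\,\|f\|_{H^\sigma(\Omega)},
\]
and the factor $h_L^{-s}$ is harmless. You only inserted the upper bound $h_L\le\mu^{L-1}h_1$, which is useless for a negative power. The hypothesis $c\mu h_\ell\le h_{\ell+1}$, which your proposal never uses, gives $h_L\ge(c\mu)^{L-1}h_1$ and hence $h_L^{-s}\le h_1^{-s}(c\mu)^{-s(L-1)}$. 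This turns $(C_1\mu^\sigma)^{L-1}$ into $(C_1c^{-s}\mu^{\sigma-s})^{L-1}$, which is $\alpha^{L-1}$ after renaming $C_1$. The factors $\nu^{-\sigma}h_1^{-s}$ and $1/\alpha$ go into $C$, so $C$ depends on the fixed $h_1$; contrary to your remark, $\nu\le 1/h_1$ gives no upper bound on $h_1^{-s}$. This is exactly the paper's argument. It separates the last level and applies the sampling inequality to $e_L$. It then pays $\delta_{L+1}^{-\sigma}$ to pass to $\|Ee_L\|_{\Phi_{L+1}}$, converts $h_L^{-\tau-d(1/2-1/q)_+}$ into powers of $\mu^{-L}$, and iterates the $\mu^\sigma$-recursion $L$ times.

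The replacement you propose does not close this step. The Fourier splitting of the scaled native norm is an $L_2$ argument, with no meaningful ``$W^\tau_q$ target'' for $q\neq 2$. The claimed per-level factor $C_1\mu^\beta$ is asserted, not derived. More importantly, it cannot help. Since $\mu^\beta\ge\mu^\sigma$, such a recursion is weaker than the one you already have. Whatever native-norm recursion you iterate, the final passage from $\|Ee_{L-1}\|_{\Phi_L}$ to $\|e_L\|_{W^\tau_q(\Omega)}$ still costs $h_L^{-s}$, because the scaled native norms behave like $L_2$ at low frequencies. Only the lower bound on $h_L$ removes this loss.

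A secondary point concerns your displayed recursion. You state it for iterates run on $\R^d$ from $e_0=Ef$ without re-extension, and in that form it is false in general. Outside the $\delta_1$-neighbourhood of $\Omega$ these iterates coincide with $Ef$ for every $\ell$. So their $L_2(\R^d)$ norm, which is dominated by every native norm $\|\cdot\|_{\Phi_\ell}$, need not tend to zero. The estimate from \cite{Wendland-10-1} used in the paper is $\|Ee_\ell\|_{\Phi_{\ell+1}}\le C_1\mu^\sigma\|Ee_{\ell-1}\|_{\Phi_\ell}$, with a fresh extension of $e_\ell|_\Omega$ on every level. There the projection property is applied to $Ee_{\ell-1}-I_{X_\ell,\Phi_\ell}(Ee_{\ell-1})$, which is an extension of $e_\ell$.
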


\begin{proof}
  We employ a specific recursion derived in  \cite[Proof of Theorem
    1]{Wendland-10-1}. If $ E: H^{\sigma}(\Omega) \to
  H^{\sigma}(\R^d) $ denotes Stein's  extension operator for Sobolev functions
  (see  \cite{Stein-71-1}) then this recursion is given by
 \begin{equation}\label{eq:RecursionEstimate}
  \| E e_\ell \|_{\Phi_{\ell+1}} \leq C_1 \mu^{\sigma}
  \| E e_{\ell-1} \|_{\Phi_\ell},
 \end{equation}
 where we use the norm of \eqref{eq:NativeSpaceNorm} induced by $
 \Phi_{\ell+1} $ and $ \Phi_\ell $, respectively.  

 The error on level $ L $ satisfies $ e_L |_{X_L} = 0 $. This allows
 us to use the sampling inequality from
 \cref{thrm:GeneralSamplingInequality} to obtain, in complete analogy
 to \cite[Theorem 1]{Wendland-10-1}, the bound 
 \begin{align*}
  \| e_L \|_{W^{\tau}_q(\Omega)} &\leq C
  h_{L}^{\sigma - \tau
  - d (\frac{1}{2} - \frac{1}{q})_+} \| e_L \|_{H^{\sigma}
  (\Omega)} \\
  &\leq C h_{L}^{\sigma - \tau
  - d (\frac{1}{2} - \frac{1}{q})_+} \| Ee_L \|_{H^{\sigma}
  (\R^d)} \\
  &\leq C h_{L}^{\sigma - \tau
  - d (\frac{1}{2} - \frac{1}{q})_+} \delta^{-\sigma}_{L+1} 
  \| Ee_L \|_{\Phi_{L+1}} \\
  & = C \nu^{-L} h_L^{- \tau
  - d (\frac{1}{2} - \frac{1}{q})_+} \| Ee_L \|_{\Phi_{L+1}} \\
  &\leq C C_1^L (\mu^L)^{- \tau
  - d (\frac{1}{2} - \frac{1}{q})_+} (\mu^L)^{\sigma}
  \| Ef \|_{\Phi_{1}} \\
  &= C \left(C_1 \mu^{\sigma - \tau
  - d (\frac{1}{2} - \frac{1}{q})_+} \right)^L \| Ef \|_{\Phi_1},
 \end{align*}
 where we used the recursion  \eqref{eq:RecursionEstimate}
 $ L $-times. 
 The stated estimate then follows from 
$
  \| Ef \|_{\Phi_1} \leq C \| Ef \|_{H^{\sigma}(\R^d)} 
  \leq C \| f \|_{H^{\sigma}(\Omega)},
$
 using the norm equivalence and the properties of the extension operator $ E $.
\end{proof}

In particular, these error estimates imply that the 
multilevel operator $ A_L $ is bounded.

\begin{corollary}
 With the notation and under the assumptions of
 \cref{thrm:ErrorEstimate}, the multilevel operator $A_L$ is bounded
 with norm satisfying
\[
  \|A_L\|_{H^{\sigma}(\Omega) \to W^{\tau}_q(\Omega)} \leq 
  C \left(\alpha^L + 1 \right).
\]
\end{corollary}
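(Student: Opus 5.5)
The plan is a triangle inequality combined with \cref{thrm:ErrorEstimate}. For $f\in H^\sigma(\Omega)$ we write $A_L(f) = f - (f - A_L(f))$ and obtain
\[
\|A_L(f)\|_{W^\tau_q(\Omega)} \le \|f\|_{W^\tau_q(\Omega)} + \|f - A_L(f)\|_{W^\tau_q(\Omega)}.
\]
By \cref{thrm:ErrorEstimate}, the second term is bounded by $C\alpha^L\|f\|_{H^\sigma(\Omega)}$ for all admissible $\tau$ and $q$.

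For the first term we need a continuous embedding $H^\sigma(\Omega)\hookrightarrow W^\tau_q(\Omega)$ with $\|f\|_{W^\tau_q(\Omega)}\le C\|f\|_{H^\sigma(\Omega)}$ in the parameter range $0\le\tau\le\widetilde\sigma$ of the theorem. Since $\Omega$ is a bounded Lipschitz domain, we use Stein's extension operator $E$ together with the Sobolev embedding theorem. The relevant differential order is $\tau \le \sigma - d(1/2-1/q)_+$.
\begin{itemize}
\item For $q\le 2$, boundedness of $\Omega$ gives $L_2\hookrightarrow L_q$ on derivatives of order up to $\tau\le\sigma$.
\item For $q>2$, the standard embedding $H^\sigma\hookrightarrow W^\tau_q$ holds whenever $\sigma-\tau\ge d(1/2-1/q)$. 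The borderline case needs care in the exceptional cases excluded by the definition of $\widetilde\sigma$, and for $q=\infty$ one uses $\tau\in\N_0$ with $\sigma-\tau>d/2$.
\end{itemize}

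A slicker route avoids citing embeddings directly: the sampling inequality \cref{thrm:GeneralSamplingInequality} already encodes exactly this parameter range. The cleanest version is to cite the embedding as standard and note that it is implicit in \cref{thrm:GeneralSamplingInequality}, whose right-hand side is only meaningful under the same parameter conditions.

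Combining the two bounds gives $\|A_L f\|_{W^\tau_q}\le C(1+\alpha^L)\|f\|_{H^\sigma}$, which is the claim after taking the supremum over $\|f\|_{H^\sigma(\Omega)}\le 1$. The only delicate step is the embedding constant in the borderline cases of $\widetilde\sigma$. I expect it to follow from the same case distinctions under which \cref{thrm:GeneralSamplingInequality} holds, so no new work is needed.
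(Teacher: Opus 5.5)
Your proposal is correct and is the paper's own argument: the triangle inequality, \cref{thrm:ErrorEstimate} for $\|f-A_L(f)\|_{W^\tau_q(\Omega)}$, and the Sobolev embedding $\|f\|_{W^\tau_q(\Omega)}\le C\|f\|_{H^\sigma(\Omega)}$, which the paper simply cites. Your side remark that the embedding is ``encoded'' in \cref{thrm:GeneralSamplingInequality} does not prove it, since that inequality only applies to functions vanishing on $X$, so citing the embedding directly, as you ultimately do, is the right choice.
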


\begin{proof}
We see that 
$
 \| A_L(f) \|_{ W^{\tau}_q(\Omega)} \leq 
 \| A_L(f) - f \|_{ W^{\tau}_q(\Omega)} + \|
 f \|_{ W^{\tau}_q(\Omega)}.
$
 Using the error result \eqref{eq:ErrorEstimate} and the Sobolev
 embedding theorem in the form
 $ \| f \|_{W^{\tau}_q(\Omega)} \leq C \| f \|_{H^{\sigma}
 (\Omega)} $ completes the proof.
\end{proof}

\section{The Multilevel Method in Nodal Representation For 
  Nested Data Sets}\label{sec:NestedSites}

Throughout this section, we will assume that the data sets $X_\ell$ are
nested, i.e. that we have $X_{\ell}\subseteq X_{\ell+1}$. We will also
  assume that the points are ordered in such a way that, for $1\le \ell\le
  L$, the first  $N_\ell$ points of $X_L$ belong to $X_\ell$, i.e.
  \[
  X_{\ell} = \{\xx_1,\ldots,\xx_{N_\ell}\},
  \]
  allowing us to drop upper indices to indicate the current
  level. This also means that $X_\ell\setminus X_{\ell-1} =
  \{\xx_{N_{\ell-1}+1},\ldots, \xx_{N_\ell}\}$ are those points which are
  newly introduced on level $\ell$. Moreover, if we set $N_0=0$ then for
  every $1\le i\le N_L$ 
  there is an $1\le \ell \le L$ such that $N_{\ell-1}+1 \le i \le N_{\ell}$,
  meaning that the point $\xx_i$ belongs to $X_{\ell}\setminus X_{\ell-1}$,
  i.e. it is first introduced into the data set at level $\ell$.

In this situation, we now want to derive new ways of representing the
multilevel operator $A_L$, i.e. we are looking for good bases of
$\widetilde{V}_L=A_L(C(\Omega))$.  First ideas in this direction have been introduced in 
\cite[Section 3.3]{Kempf-Wendland-23-1} and
\cite{Buettner-etal-25-1}, which we will refine here.  

\subsection{Lagrange and Newton-type Bases of the Approximation
  Spaces}

Though we have introduced the multilevel operator $A_L$ as an operator
$A_L:C(\Omega)\to V_L$, it actually uses only $f|X_L$ to compute
$A_L(f)$ for any $f\in C(\Omega)$ if the data sets $X_\ell$ are
nested. For every vector $\yy\in\R^{N_L}$ we can obviously find a
function $f\in C(\Omega)$ with $f|X_L=\yy$,  which means that we can,
with a slight abuse of notation, also consider the multilevel operator
as a mapping $A_L:\R^{N_L}\to V_L$. 
This leads to the following first
definition of a basis for $\widetilde{V}_L=A_L(C(\Omega))= A_L(\R^{N_L})$.

\begin{definition}
Denote the $i$-th unit vector in $\R^{N_L}$ by $\ee_i^{(L)}$. Then,
the \emph{multilevel cardinal basis} $\cB=\{b_1,\ldots, b_{N_L}\}$ for
the approximation space $\widetilde{V}_L$ is defined by
\[
b_i = A_L\left(\ee_i^{(L)}\right), \qquad 1\le i\le N_L.
\]
\end{definition}
Note that we also have $b_i=A_L(\chi_i^{(L)})$, using the
previously introduced Lagrange functions, as we have $\chi_i^{(L)}|X_L=\ee_i^{(L)}$.
Writing $f|X_L$ as
$f|X_L = \sum_{i=1}^{N_L} f(\xx_i) \ee_i^{(L)}$ and the linearity of
$A_L$ immediately yield
\[
A_L f = \sum_{j=1}^{N_L} f(\xx_i) A_L\left(\ee_i^{(L)}\right) = \sum_{j=1}^{N_L}
f(\xx_i) b_i,
\]
showing that $\cB$  spans $\widetilde{V}_L$. It is also indeed a
cardinal or Lagrange basis.

\begin{theorem}\label{thm:basis1}
The multilevel cardinal basis  is indeed a basis of $\widetilde{V}_L =
A_L(C(\Omega))$, showing particularly $\dim \widetilde{V}_L=N_L$. Moreover, the
basis functions are cardinal in the sense that 
$b_i(\xx_j) = \delta_{ij}$ for $1\le i,j\le N_L$.
\end{theorem}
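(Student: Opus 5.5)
The plan is to prove cardinality first and derive linear independence from it. The only ingredient needed is the interpolation property of the preceding lemma: for any $f\in C(\Omega)$, $A_L f$ interpolates $f$ on $X_L$.

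\emph{Cardinality.} Fix $i$ and pick any $f\in C(\Omega)$ with $f|X_L=\ee_i^{(L)}$; for instance $f=\chi_i^{(L)}$. In the nested setting $A_L f$ depends only on $f|X_L$, so $b_i=A_L\big(\ee_i^{(L)}\big)=A_L f$ is well defined. The lemma then gives
\[
b_i(\xx_j)=(A_L f)(\xx_j)=f(\xx_j)=\delta_{ij}, \qquad 1\le j\le N_L.
\]
The one point to justify is that $A_L f$ depends only on $f|X_L$. I would prove this by induction on the levels. $s_1=I_{X_1,\Phi_1}(f)$ uses only $f|X_1\subseteq f|X_L$. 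If $e_{\ell-1}|X_L$ is determined by $f|X_L$, then $s_\ell=I_{X_\ell,\Phi_\ell}(e_{\ell-1})$ uses only $e_{\ell-1}|X_\ell$, which is determined because $X_\ell\subseteq X_L$. Hence $e_\ell|X_L=e_{\ell-1}|X_L-s_\ell|X_L$ is determined as well.

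\emph{Basis and dimension.} The display preceding the theorem shows that $\cB$ spans $\widetilde V_L$, so only linear independence remains. Suppose $\sum_i c_i b_i=0$. Evaluating at $\xx_j$ and using cardinality gives $c_j=0$ for every $j$. Therefore $\cB$ is a basis of $\widetilde V_L$ and $\dim\widetilde V_L=N_L$.

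I do not expect a real obstacle here. The only step that needs care is the well-definedness of $A_L$ on $\R^{N_L}$, which is exactly where nestedness enters: without it, $e_{\ell-1}$ would be sampled at points outside $X_L$. The induction above handles this.
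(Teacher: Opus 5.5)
Your proof is correct and follows the paper's argument: cardinality from the interpolation property applied to $\chi_i^{(L)}$, linear independence from cardinality, and spanning from the display preceding the theorem. The paper additionally spells out $\operatorname{span}(\cB)\subseteq\widetilde V_L$ (any $\sum_i\alpha_i b_i$ equals $A_L(\aalpha)$), which you leave implicit since each $b_i=A_L(\chi_i^{(L)})$ lies in the image of $A_L$; conversely, your induction makes explicit the well-definedness of $A_L$ on $\R^{N_L}$, which the paper only asserts before the definition.
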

\begin{proof}
The cardinal condition follows from the fact that $A_Lf$
interpolates $f$ on $X_L$. Here, this means
$
b_i(\xx_j) = A_L(\chi_i^{(L)})(\xx_j) = \chi_i^{(L)}(\xx_j) = \delta_{ij}.
$
This condition also guarantees that the functions $b_i$ are linearly
independent. This means  $\dim \operatorname{span}(\cB) = N_L$
and the above considerations have shown $\widetilde{V}_L\subseteq
\operatorname{span}(\cB)$.  Finally, any $f\in
\operatorname{span}(\cB)$ can be written as $f=\sum \alpha_i
b_i$. This, however, means $f=A_L(\aalpha)$ and thus $f\in
\widetilde{V}_L$ showing $\widetilde{V}_L = \operatorname{span}(\cB)$.
\end{proof}

The above result shows that $\widetilde{V}_L$ is indeed a proper
subspace of $V_L$, as the latter has dimension $N_1+\cdots + N_L$,
while the former has only dimension $N_L$.

To better understand this multilevel cardinal basis, we will now
derive several other representations. To this end, we will use 
an alternative way of expressing  the multilevel operator, see 
\cite{Buettner-etal-25-1,Kempf-Wendland-23-1}. In what follows,
 we will always assume that subsets $\fraku = \{u_1,\ldots, 
u_{|u|}\} \subseteq \{1,\ldots, L\}$ are ordered as $ u_1 < u_2 <
\cdots < u_{|\fraku|}$.

\begin{theorem}\label{thrm:OldNewRepresentation}
 The   multilevel operator $ A_L $ can be represented as
\[
  A_L = \sum_{\substack{\fraku \subseteq \{1, \ldots, L \}
  \\ 1 \leq |\fraku| \leq L}} (-1)^{|\fraku|+1} \cI_{\fraku}
  = \sum_{j = 1}^{L} (-1)^{j+1} \sum_{\substack{\fraku 
  \subseteq 
  \{1, \dots, L \}\\ |\fraku| = j}} \cI_{\fraku},
\]
 where $\cI_{\fraku}$ is the identity if $\fraku=\emptyset$ or
 otherwise $
 \cI_{\fraku}:C(\Omega)\to W_{|\fraku|} $ is the combined
 operator  
 \begin{equation}\label{eq:CombinedInterpolationOperator}
  \cI_{\fraku} = I_{X_{\fraku_{|\fraku|}}, \Phi_{\fraku_{|\fraku|}}}
	I_{X_{\fraku_{|\fraku|-1}}, \Phi_{\fraku_{|\fraku|-1}}} \cdots 
	I_{X_{u_1}, \Phi_{u_1}}.
  \end{equation}
\end{theorem}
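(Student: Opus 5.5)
We prove the representation by expressing the errors $e_\ell$ in closed form and then expanding a telescoping sum. Write $I_\ell := I_{X_\ell,\Phi_\ell}$ and let $\mathrm{Id}$ denote the identity on $C(\Omega)$.

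The recursion $e_\ell = e_{\ell-1} - I_\ell(e_{\ell-1})$ reads $e_\ell = (\mathrm{Id} - I_\ell) e_{\ell-1}$. Starting from $e_0 = f$, induction on $\ell$ gives
\[
  e_\ell = (\mathrm{Id} - I_\ell)(\mathrm{Id} - I_{\ell-1}) \cdots (\mathrm{Id} - I_1) f .
\]
Since $f_L = f - e_L$ and $f_L = A_L(f)$ by Definition~\ref{def:MultilevelOperator}, we obtain
\[
  A_L = \mathrm{Id} - (\mathrm{Id} - I_L) \cdots (\mathrm{Id} - I_1).
\]
This identity is the whole content of the multilevel recursion. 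The operators are linear, so composition distributes over sums. Nothing here uses nestedness, only the residual recursion.

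Next we expand the ordered product. Each of the $L$ factors contributes either $\mathrm{Id}$ or $-I_\ell$. A choice corresponds to the subset $\fraku\subseteq\{1,\dots,L\}$ of indices at which $-I_\ell$ is chosen. The order of composition is preserved in every term: the factor with the largest index acts last. Hence the term belonging to $\fraku$ is
\[
  (-1)^{|\fraku|} I_{u_{|\fraku|}} \cdots I_{u_1} = (-1)^{|\fraku|}\cI_{\fraku},
\]
where the $\mathrm{Id}$ factors drop out. The subset $\fraku=\emptyset$ gives $\mathrm{Id}=\cI_\emptyset$. This expansion is proved formally by induction on $L$: multiply the expansion for $L-1$ on the left by $(\mathrm{Id} - I_L)$. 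Subsets not containing $L$ keep their term, and subsets containing $L$ pick up $-I_L$ composed on the left, which matches \eqref{eq:CombinedInterpolationOperator}.

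Subtracting from $\mathrm{Id}$ cancels the empty-set term. Each remaining sign flips to $(-1)^{|\fraku|+1}$, which is the first formula. Grouping the subsets by $j=|\fraku|$ gives the second form.

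The only step needing care is the bookkeeping of composition order in the non-commutative expansion. The operators $I_\ell$ do not commute, so we must check that the induction step indeed yields $I_{u_{|\fraku|}}\cdots I_{u_1}$ with increasing indices applied from right to left. Composing on the left with the new factor $I_L$, where $L$ is the largest index, ensures this. One small remark: the codomain of $\cI_\fraku$ is $W_{u_{|\fraku|}}$, the space of its last-applied interpolant, rather than $W_{|\fraku|}$ as written; the identity itself is unaffected.
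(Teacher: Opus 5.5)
Your argument is correct and complete. The paper gives no proof of this theorem itself; it defers to \cite{Buettner-etal-25-1,Kempf-Wendland-23-1}, so there is no in-paper argument to compare against.

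Your factorization $\mathrm{Id}-A_L=(\mathrm{Id}-I_{X_L,\Phi_L})\cdots(\mathrm{Id}-I_{X_1,\Phi_1})$ is the residual recursion $A_L(f)=A_{L-1}(f)+I_{X_L,\Phi_L}(f-A_{L-1}(f))$ in operator form. The paper uses exactly this recursion later, in the proofs of Corollary~\ref{cor:NestedImageSpaces} and Theorem~\ref{thrm:NewtonBasis}, so your route is the natural self-contained one. Your induction composes the new factor $(\mathrm{Id}-I_{X_L,\Phi_L})$ on the left, using linearity of $I_{X_L,\Phi_L}$. That is precisely what guarantees the ordering in \eqref{eq:CombinedInterpolationOperator}.

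Your codomain remark is also right. The range of $\cI_{\fraku}$ lies in $W_{u_{|\fraku|}}$, so $W_{|\fraku|}$ in the statement is a typo.
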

The ordering in (\ref{eq:CombinedInterpolationOperator}) is
important, as the interpolation operators do not commute.

To apply this representation, we note that for any index $N_{m-1}
+1 \le i \le N_m$ with $1\le m\le L$ and any $1\le u_1\le 
m-1$ we have $\chi_i^{(L)}|X_{u_1}=0$,
showing particularly
$
I_{X_{u_1},\Phi_{u_1}} (\chi_i^{(L)}) = 0.
$
This immediately yields the following result, which is essentially a reformulation of 
\cite[Theorem 3.9]{Buettner-etal-25-1} and provides 
a first alternative representation of $\cB=\{b_i : 1 \leq i \leq N_L\}$.

\begin{corollary}\label{thrm:ReprMultilevelOperator}
Assume that the index $i$ satisfies $N_{m-1}+1\le i\le N_m$ for
some $1\le m \le L$. Then, the corresponding basis function has the
representation  
\begin{equation}\label{eq:DefBi}
 b_i = \sum_{\emptyset \neq \fraku \subseteq
 \{m, \dots, L\}} (-1)^{|\fraku| +1}
 \cI_{\fraku} \left( \chi^{(L)}_{i} \right), 
 \quad 1 \leq i \leq N_L,
\end{equation}
with $ \cI_{\fraku} $ as in 
\eqref{eq:CombinedInterpolationOperator}.
\end{corollary}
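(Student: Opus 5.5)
The plan is to start from the definition $b_i = A_L(\chi_i^{(L)})$ and insert the representation of $A_L$ from \cref{thrm:OldNewRepresentation}. This gives
\[
b_i = \sum_{\emptyset\neq \fraku\subseteq\{1,\ldots,L\}} (-1)^{|\fraku|+1}\cI_{\fraku}\bigl(\chi_i^{(L)}\bigr).
\]
The task then reduces to showing that every summand whose index set $\fraku$ contains an element smaller than $m$ vanishes. The surviving sum runs exactly over the nonempty subsets of $\{m,\ldots,L\}$, which is \eqref{eq:DefBi}.

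For the vanishing step, fix $\fraku=\{u_1<\cdots<u_{|\fraku|}\}$ with $\min\fraku = u_1\le m-1$. By \eqref{eq:CombinedInterpolationOperator}, the first operator applied to $\chi_i^{(L)}$ is $I_{X_{u_1},\Phi_{u_1}}$. By definition,
\[
I_{X_{u_1},\Phi_{u_1}}(g)=\sum_{k=1}^{N_{u_1}} g(\xx_k)\chi_k^{(u_1)},
\]
so this operator depends only on $g|X_{u_1}$. Since the sets are nested and ordered, $X_{u_1}=\{\xx_1,\ldots,\xx_{N_{u_1}}\}$ with $N_{u_1}\le N_{m-1}<i$. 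The cardinality $\chi_i^{(L)}(\xx_k)=\delta_{ik}$ on $X_L\supseteq X_{u_1}$ then gives $\chi_i^{(L)}|X_{u_1}=0$. Hence $I_{X_{u_1},\Phi_{u_1}}(\chi_i^{(L)})=0$. The remaining factors of $\cI_{\fraku}$ are linear interpolation operators, so they map $0$ to $0$, and therefore $\cI_{\fraku}(\chi_i^{(L)})=0$.

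Every nonempty $\fraku\subseteq\{1,\ldots,L\}$ either satisfies $\min\fraku\le m-1$, in which case its term vanishes, or is a subset of $\{m,\ldots,L\}$. Splitting the sum along this dichotomy yields the claim.

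There is no real obstacle here. The only point needing care is the ordering convention: the innermost, first-applied operator in $\cI_{\fraku}$ is the one with the smallest level $u_1$, so it is exactly the operator that sees only the coarse points, where $\chi_i^{(L)}$ vanishes. One should also note that $\chi_i^{(L)}$ may be replaced by any continuous $f$ with $f|X_L=\ee_i^{(L)}$, since $A_L$ depends only on $f|X_L$ in the nested case. This independence of the choice of representative is already built into the definition of $b_i$.
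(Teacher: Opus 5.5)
Your proof is correct and matches the paper's argument. Like the paper, you insert the subset expansion of $A_L$ from \cref{thrm:OldNewRepresentation} and note that every term with $u_1\le m-1$ vanishes, because the first-applied operator $I_{X_{u_1},\Phi_{u_1}}$ only sees $\chi_i^{(L)}|X_{u_1}=0$; your explicit check $N_{u_1}\le N_{m-1}<i$ just spells out the bookkeeping the paper leaves implicit.
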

\begin{proof}
  This immediately follows from
  \[
  b_i = A_L\left(\ee_i^{(L)}\right)  = \sum_{\substack{\fraku \subseteq \{1, \ldots, L \}
  \\ 1 \leq |\fraku| \leq L}} (-1)^{|\fraku|+1}
  \cI_{\fraku}\left(\ee_i^{(L)}\right)
  \]
  and the fact that  for all $\fraku$ with $u_1\le m-1$ we have
  $\cI_{\fraku}(\chi_i^{(L)}) = 0$. 
\end{proof}

While this is already an improvement, it still sums over too many
sets $\fraku$.  As before,  we can alternatively use
$\cI_{\fraku}(\ee_i^{(L)})$ instead of $\cI_{\fraku}(\chi_i^{(L)})$ in
\eqref{eq:DefBi}.

For a more expressive representation of the function
$ b_i $, we need the following lemma concerning 
interpolation of Lagrange functions. The idea of its proof has already
been laid out above.

\begin{lemma}\label{lem:InterpolationLagrangeFunc}
 Let $ X_\ell \subseteq X_L \subseteq \Omega $ be two
 sets of data sites with associated cardinal functions  $
 \{\chi^{(\ell)}_{i} : 1 \leq i \leq N_\ell\}  
 \subseteq W_\ell $ and $ \{\chi^{(L)}_{i} : 1 \leq i \leq N_L\} 
 \subseteq W_L  $, respectively. Then, for any $1\le i\le N_L$ we have 
\[
   I_{X_\ell, \Phi_\ell} \left(\chi^{(L)}_{i} \right) = 
   \begin{cases}
    \chi^{(\ell)}_{i},& \quad 1
                   \leq i \leq N_\ell, \\
    0,& \quad N_{\ell}+1 \leq i \leq N_L.
   \end{cases}
\]
\end{lemma}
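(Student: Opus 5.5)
The plan is to compute $I_{X_\ell,\Phi_\ell}(\chi_i^{(L)})$ directly from the definition of the interpolation operator. Since
\[
I_{X_\ell,\Phi_\ell}(g) = \sum_{j=1}^{N_\ell} g(\xx_j)\,\chi_j^{(\ell)},
\]
the image depends only on the values of $g$ on $X_\ell$. Here we use the nested ordering convention $X_\ell = \{\xx_1,\ldots,\xx_{N_\ell}\}$, so the point $\xx_j$ of $X_\ell$ is the same as the point $\xx_j$ of $X_L$ for $1\le j\le N_\ell$. Taking $g=\chi_i^{(L)}$, the cardinality $\chi_i^{(L)}(\xx_j)=\delta_{ij}$ for $1\le i,j\le N_L$ holds in particular for all $1\le j\le N_\ell\le N_L$. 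Hence
\[
I_{X_\ell,\Phi_\ell}\bigl(\chi_i^{(L)}\bigr) = \sum_{j=1}^{N_\ell}\delta_{ij}\,\chi_j^{(\ell)}.
\]

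We then split into the two cases of the statement. If $1\le i\le N_\ell$, exactly one summand survives, namely $j=i$, and the result is $\chi_i^{(\ell)}$. If $N_\ell+1\le i\le N_L$, then $\delta_{ij}=0$ for every $j\le N_\ell$, so the sum vanishes. Equivalently, in this case $\chi_i^{(L)}|X_\ell=0$, and the interpolant of zero data is zero by uniqueness of kernel interpolation, since $M_\ell$ is positive definite.

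There is no real obstacle here. The only point needing care is the bookkeeping of indices: one must use the ordering assumption so that the index $j$ of a point in $X_\ell$ coincides with its index in $X_L$. Without this convention, the first case would read $\chi^{(\ell)}_{k}$, where $k$ is the position of $\xx_i^{(L)}$ in $X_\ell$.
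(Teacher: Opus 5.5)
Your proof is correct and follows the paper's argument. Both note that $\chi_i^{(L)}|X_\ell$ equals the unit vector $\ee_i$ when $i\le N_\ell$ and $\00$ otherwise, using the nested ordering; you read the result off the defining formula of $I_{X_\ell,\Phi_\ell}$, while the paper cites uniqueness of the interpolant, and these amount to the same step.
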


\begin{proof}
The Lagrange property of $ \chi^{(L)}_{i} $ implies that 
$ \chi^{(L)}_{i}|{X_\ell} = \ee_i \in \R^{N_\ell} $ if $ i \leq 
N_\ell $, i.e. if $ \xx_{i} \in X_\ell $ and $ \chi^{(L)}_{i}|{X_\ell} = \00 \in \R^{N_\ell} $ 
 if $ i > N_\ell $, i.e.  if $ \xx_{i} \notin X_\ell $.  The claim
 then follows from the uniqueness of the interpolant.
\end{proof}

In essence, \cref{lem:InterpolationLagrangeFunc} 
states that interpolating a fine-level Lagrange 
function on a coarse level yields the corresponding
coarse-level Lagrange function if its anchor point lies
in the coarse set of data sites. Otherwise, the result is zero.

We need another idea, which is very similar to the one that we used a
few times before. If $f|X_\ell = 0 $ for some $1\le \ell\le L$ then
the first $\ell$ steps of the multilevel method produce the zero
function as the approximation and the actual approximation starts with
step $\ell+1$. We will use this and hence introduce the notation
$A_{\{\ell+1,\dots,L\}}:C(\Omega) \to V_L$ for the multilevel operator
starting at level $\ell+1$. It is easy to see that the
representation from Theorem \ref{thrm:OldNewRepresentation} becomes
in this situation
\begin{equation}\label{restrictedmultilevel}
A_{\{\ell+1, \dots,L\}}(f) := \sum_{\emptyset \neq
  \fraku \subseteq \{\ell +1, \dots, L\}} (-1)^
{|\fraku|+1} \cI_{\fraku} (f)
\end{equation}

With this we can prove another representation of 
the multilevel cardinal bases $\cB= \{b_i : 1 \leq i \leq N_L\}$.

\begin{theorem}\label{thrm:Bi}
 With the assumptions and notation from above, let $ 1 \leq m
 \leq L $ and $ N_{m-1}+1 \leq i \leq N_m $. Then 
 the function $ b_i $ has the representation
 \begin{equation}\label{eq:NewRepresentationBi}
  b_i = \chi^{(L)}_{i} + \sum_{\ell = m}
  ^{L-1} \left[\chi^{(\ell)}_{i} - A_{\{\ell+1, \dots,L\}} 
  \left(\chi^{(\ell)}_{i} \right) \right].
 \end{equation}
\end{theorem}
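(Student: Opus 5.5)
The plan is to start from the representation of \cref{thrm:ReprMultilevelOperator},
\[
 b_i = \sum_{\emptyset \neq \fraku \subseteq \{m, \dots, L\}} (-1)^{|\fraku| +1} \cI_{\fraku} \left( \chi^{(L)}_{i} \right),
\]
and reorganize the sum by the smallest element $\ell = u_1$ of $\fraku$, with $m \le \ell \le L$. Every nonempty $\fraku\subseteq\{m,\dots,L\}$ with $u_1=\ell$ can be written uniquely as $\fraku = \{\ell\}\cup \mathfrak{v}$ with $\mathfrak{v}\subseteq\{\ell+1,\dots,L\}$, where $\mathfrak{v}$ may be empty. By \eqref{eq:CombinedInterpolationOperator}, and because of the ordering convention, the first operator applied is $I_{X_\ell,\Phi_\ell}$. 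Hence
\[
\cI_{\fraku}\left(\chi_i^{(L)}\right) = \cI_{\mathfrak{v}}\left(I_{X_\ell,\Phi_\ell}\left(\chi_i^{(L)}\right)\right),
\]
with $\cI_\emptyset$ the identity.

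Next I apply \cref{lem:InterpolationLagrangeFunc}. Since $i\le N_m\le N_\ell$ for every $\ell\ge m$, we get $I_{X_\ell,\Phi_\ell}(\chi_i^{(L)})=\chi_i^{(\ell)}$. So the contribution of all $\fraku$ with $u_1=\ell$ is
\[
\sum_{\mathfrak{v}\subseteq\{\ell+1,\dots,L\}} (-1)^{|\mathfrak{v}|+2}\,\cI_{\mathfrak{v}}\left(\chi_i^{(\ell)}\right)
= \chi_i^{(\ell)} - \sum_{\emptyset\neq\mathfrak{v}\subseteq\{\ell+1,\dots,L\}} (-1)^{|\mathfrak{v}|+1}\cI_{\mathfrak{v}}\left(\chi_i^{(\ell)}\right).
\]
Here the term $\mathfrak{v}=\emptyset$ has been split off. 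By \eqref{restrictedmultilevel}, the remaining sum is exactly $A_{\{\ell+1,\dots,L\}}(\chi_i^{(\ell)})$, so for $m\le \ell\le L-1$ this group equals $\chi_i^{(\ell)} - A_{\{\ell+1,\dots,L\}}(\chi_i^{(\ell)})$.

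The case $\ell=L$ is handled separately. There the only set is $\fraku=\{L\}$, and it contributes $I_{X_L,\Phi_L}(\chi_i^{(L)})=\chi_i^{(L)}$, because $\chi_i^{(L)}\in W_L$ is reproduced by its own interpolant. Summing the groups over $\ell=m,\dots,L$ gives \eqref{eq:NewRepresentationBi}.

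The main care point is bookkeeping rather than analysis. One must check that the decomposition $\fraku=\{\ell\}\cup\mathfrak{v}$ is a bijection onto the nonempty subsets of $\{m,\dots,L\}$. One must also get the sign right: $(-1)^{|\fraku|+1}=(-1)^{|\mathfrak{v}|+2}=-(-1)^{|\mathfrak{v}|+1}$. Finally, the convention $\cI_\emptyset=\mathrm{id}$ must be used consistently, since it is what produces the isolated term $\chi_i^{(\ell)}$.
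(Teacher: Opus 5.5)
Your proposal is correct and follows the paper's proof essentially step for step. You group the sets $\fraku\subseteq\{m,\dots,L\}$ by their smallest element $\ell$, factor $\cI_{\fraku}=\cI_{\widetilde{\fraku}}\circ I_{X_\ell,\Phi_\ell}$, use \cref{lem:InterpolationLagrangeFunc} to replace $I_{X_\ell,\Phi_\ell}(\chi_i^{(L)})$ by $\chi_i^{(\ell)}$, and identify the remaining alternating sum with $A_{\{\ell+1,\dots,L\}}$ via \eqref{restrictedmultilevel}. Your explicit splitting-off of the empty-set term and the sign check only make the paper's last step a bit more transparent.
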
 

\begin{proof}
We start with the representation of $b_i$ from \eqref{eq:DefBi} in the
form
\[
 b_i = \sum_{\emptyset \neq \fraku \subseteq
 \{m, \dots, L\}} (-1)^{|\fraku| +1}
 \cI_{\fraku} \left( \chi^{(L)}_{i} \right).
\]
Next, we order the sets $ \emptyset \neq \fraku \subseteq
 \{m, \dots, L \} $ according to their 
 smallest element, which we denote by $ \ell$ and which obviously
 satisfies $\ell \ge m $. Thus, we set  $ 
 \fraku = \{\ell \} \cup \widetilde{\fraku} $, where  
 $ \widetilde{\fraku} \subseteq \{\ell +1, \dots, L \} $.
 In particular, we have for $ \ell = L $ that $ \fraku = 
 \{L\} $, $ \widetilde
 {\fraku} = \emptyset $ and 
\[
  \sum_{\fraku = \{L\}} (-1)^{| \fraku| +1} \cI_{\fraku}
  (\chi^{(L)}_{i}) = 
  I_{X_L,\Phi_L} (\chi^{(L)}_{i}) = \chi^{(L)}_{i},
  \]
since $ \chi^{(L)}_{i} \in W_L $ and the uniqueness of the interpolant.
 This leads to 
 \begin{align*}
  b_i &= \chi^{(L)}_{i} + \sum_{\ell = m}^{L-1}
  \sum_{\widetilde{\fraku} \subseteq \{\ell +1,\dots, L \}}
   (-1)^{| \widetilde{\fraku}|} \cI_{\widetilde{\fraku}}
   \left(I_{X_\ell,\Phi_\ell}(\chi^{(L)}_{i})\right) \\
  &= \chi^{(L)}_{i} + \sum_{\ell = m}^{L-1}
  \sum_{\widetilde{\fraku} \subseteq \{\ell+1,\dots, L \}}
   (-1)^{| \widetilde{\fraku}|} \cI_{\widetilde{\fraku}}
   \left(I_{X_\ell,\Phi_\ell}(\chi^{(\ell)}_{i}) \right),
 \end{align*}
where we used \cref{lem:InterpolationLagrangeFunc} to go from 
$ I_{X_\ell,\Phi_\ell}(\chi^{(L)}_{i}) $ to $ I_{X_\ell,\Phi_\ell}(
  \chi^{(\ell)}_{i}) $ in the 
  last step, which we were allowed to do as we have $i\le N_m\le N_\ell$.
  To arrive at the claim, we see that, for fixed $ m \leq \ell \leq L-1 $, 
\begin{align*}
\sum_{\widetilde{\fraku} \subseteq \{\ell +1,\dots, L \}}
   (-1)^{| \widetilde{\fraku}|} \cI_{\widetilde{\fraku}}\left(
   I_{X_\ell,\Phi_\ell}
  (\chi^{(\ell)}_{i})\right) 
  &=  I_{X_\ell,\Phi_\ell}
  (\chi^{(\ell)}_{i}) - A_{\{\ell+1, \dots, L\}} 
  \left(I_{X_\ell,\Phi_\ell}
  (\chi^{(\ell)}_{i}) \right) \\
  &= \chi^{(\ell)}_{i} - A_{\{\ell+1, \dots, L\}} 
  (\chi^{(\ell)}_{i}),
\end{align*}
using also (\ref{restrictedmultilevel}).
\end{proof}

As it turns out, we can even prove a relation of the 
image spaces $\widetilde{V}_\ell = A_\ell(C(\Omega))$ of multilevel
operators on different levels. 

\begin{corollary}\label{cor:NestedImageSpaces}
 For all $ 1 \leq \ell  \leq L $ we have
 $A_L(\widetilde{V}_\ell) = \widetilde{V}_\ell$. To be more precise,
 for any $f\in C(\Omega)$, we have $ A_L(A_\ell(f)) = A_\ell(f)$. This
 means in particular $A_L^2 = A_L$ and the nestedness of the the
 approximation spaces, i.e.
\[
 \widetilde{V}_1 \subseteq \widetilde{V}_2 \subseteq 
 \cdots \subseteq \widetilde{V}_L.
\]
\end{corollary}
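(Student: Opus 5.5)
The key fact is that, for nested data sets, $A_L(g)$ depends only on $g|X_L$ and interpolates $g$ on $X_L$. Hence any $g \in \widetilde{V}_L$ is reproduced by $A_L$. Indeed, by \cref{thm:basis1} such a $g$ can be written as $g=\sum_i \alpha_i b_i$. Cardinality gives $g|X_L=\aalpha$, and then
\[
A_L(g)=A_L(g|X_L)=A_L(\aalpha)=\sum_i\alpha_i b_i=g.
\]
Here we use the identification $A_L:\R^{N_L}\to V_L$ together with linearity. This already gives $A_L^2=A_L$, because $A_L(f)\in\widetilde V_L$ for every $f\in C(\Omega)$. The whole corollary therefore reduces to the inclusion $\widetilde V_\ell\subseteq\widetilde V_L$ for $\ell\le L$. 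Once that inclusion holds, $A_L(A_\ell f)=A_\ell f$ follows, so $A_L(\widetilde V_\ell)=\widetilde V_\ell$, and nestedness is just the inclusion applied level by level.

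To prove the inclusion, it suffices by linearity to show $b_i^{(\ell)}\in\widetilde V_L$ for the cardinal basis $b_i^{(\ell)}=A_\ell(\ee_i^{(\ell)})$, $1\le i\le N_\ell$, of $\widetilde V_\ell$. I would proceed by induction on $L\ge \ell$, showing $\widetilde V_{L-1}\subseteq \widetilde V_L$ as the basic step.

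The natural tool is the recursion $A_L(f)=A_{L-1}(f)+I_{X_L,\Phi_L}(e_{L-1})$, where $e_{L-1}=f-A_{L-1}(f)$. Take $g=A_{L-1}(f)\in\widetilde V_{L-1}$. Its first $L-1$ multilevel steps reproduce $g$, since the first $L-1$ steps of the method applied to $g$ are those of $A_{L-1}$, and by the inductive idempotence $A_{L-1}(g)=g$. The residual is then $e_{L-1}=g-A_{L-1}(g)=0$, so the last correction vanishes and $A_L(g)=A_{L-1}(g)+I_{X_L,\Phi_L}(0)=g$. Thus $g\in A_L(C(\Omega))=\widetilde V_L$.

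So the induction should carry simultaneously the statements $A_{k}^2=A_k$ and $\widetilde V_{k-1}\subseteq\widetilde V_k$. Idempotence of $A_{L-1}$ comes from the interpolation argument of the first paragraph applied at level $L-1$, so it requires no induction; the inclusion then follows as above.

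The main obstacle is being careful that $A_L(g)$ computed by the recursion really coincides with $A_{L-1}(g)$ plus a correction. This holds because the multilevel iterates $f_1,\dots,f_{L-1}$ for $L$ levels are exactly those for $L-1$ levels. Alternatively, \cref{thrm:Bi} could be used to verify directly that $A_L(b_i^{(\ell)})=b_i^{(\ell)}$, but the residual argument seems cleaner, and I would use it.
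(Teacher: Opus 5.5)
Your proof is correct and rests on the same two ingredients as the paper's. The first is that $A_k$ reproduces $\widetilde{V}_k$, because it interpolates on $X_k$ and depends only on values there; this is the paper's base case. The second is the one-step recursion $A_L(g)=A_{L-1}(g)+I_{X_L,\Phi_L}(g-A_{L-1}(g))$ with vanishing residual. The difference is only in ordering: the paper inducts directly on $A_L(A_\ell f)=A_\ell f$ and reads off nestedness afterwards. You instead prove the adjacent inclusions $\widetilde{V}_{L-1}\subseteq\widetilde{V}_L$ first, which, as you note, needs no genuine induction, and then deduce $A_L(A_\ell f)=A_\ell f$ from idempotence of $A_L$ on $\widetilde{V}_L$.
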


\begin{proof}
Let $f\in C(\Omega)$ be given.   We prove $A_L(A_\ell(f)) = A_\ell(f)$
by induction on $m:=L-\ell\in\N_0$ for $0\le m\le L$. 
For $m=0$ we have $\ell=L$. The multilevel method interpolates on the
finest level, i.e. $(A_L f)|X_L=f|X_L$ and as it only depends on the
values at $X_L$ we thus have $A_L(A_L(f))=A_L(f)$.

Next, assume that the statement is true for any $m\le L-1$. To show it for
$m+1\le L$, we define $\ell$ via  $m+1=L-\ell$. The recursive definition of the multilevel
method and the induction hypothesis for $m=L-1-\ell$ yield
\begin{align*}
A_L(A_\ell(f)) &= A_{L-1}(A_\ell(f)) +
\cI_{X_L,\Phi_L}(A_\ell(f)-A_{L-1}(A_\ell(f)))\\
&= A_\ell(f) + \cI_{X_L,\Phi_L} (A_\ell(f)-A_\ell(f))\\
& = A_\ell(f),
\end{align*}
completing the induction. The statement $A_L^2=A_L$ immediately follows from
this. Moreover, we have $\widetilde{V}_\ell = A_\ell(C(\Omega))
\subseteq C(\Omega)$ and thus $\widetilde{V}_\ell =
A_{\ell+1}(\widetilde{V}_\ell) \subseteq A_{\ell+1}(C(\Omega)) =
\widetilde{V}_{\ell+1}$, proving the nestedness of the approximation spaces.
\end{proof}

This nestedness and the finite dimensionality of the approximation
spaces $\widetilde{V}_\ell$ allows us to decompose them in the form
\begin{equation}\label{decomp2}
\widetilde{V}_{\ell} = \widetilde{V}_{\ell-1} + \widetilde{W}_{\ell}
\end{equation}
and we will now prove that this sum is again a direct sum and
introduce another basis for $\widetilde{V}_L$, which also gives
another representation of the multilevel operator.

\begin{definition}
A Newton-type basis for the approximation space $\widetilde{V}_L$ is
defined by
 \[
 \widetilde{\cB} = \left\{ \chi_i^{(\ell)} : 1\le \ell \le L,
 N_{\ell-1}+1\le i\le N_\ell\right\}.
 \]
  \end{definition}
The next theorem will show that this is indeed a basis for
$\widetilde{V}_L$. It then follows that in the above decomposition
(\ref{decomp2}), the detail space $\widetilde{W}_\ell$ has the basis
$\{\chi_i^{(\ell)} : N_{\ell-1}+1 \le i\le N_{\ell}\}$. This new bases
demonstrates more accurately the inductive structure of the multilevel method. 
Again, we see that the additional basis elements 
$ \{\chi^{(\ell)}_{i}  :  N_{\ell-1} + 1 \leq i \leq N_\ell \} $
vanish on $ X_{\ell-1} $, showing a Newton-type behavior. 

\begin{theorem}\label{thrm:NewtonBasis}
  With $N_0:=0$ and  $A_0(f):= 0$, the multilevel approximation at level $ L $
 to $ f \in C(\Omega) $ can be written as 
\begin{equation}\label{rep2}
   A_L(f) = \sum_{\ell=1}^{L} \sum_{i=N_{\ell-1}+1}^{N_\ell}
   \left[f(\xx_i)-A_{\ell-1} f(\xx_i)\right] \chi_i^{(\ell)}.
\end{equation}
Consequently, the approximation space $\widetilde{V}_L=A_L(C(\Omega))$
is recursively given by
\[
\widetilde{V}_{L} =
\begin{cases}
  \operatorname{span}\left\{\chi_i^{(1)} : 1\le i\le N_1\right \} & \mbox{ if
  } L=1, \\
  \widetilde{V}_{L-1} \oplus \operatorname{span} \left\{\chi_i^{(L)} :
  N_{L-1}+1 \le i\le N_L\right\} & \mbox{ if } L\ge 2.
  \end{cases}
\]
\end{theorem}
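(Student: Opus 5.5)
We prove \eqref{rep2} by induction on $L$, using the recursive definition of the multilevel method, $A_L(f) = A_{L-1}(f) + I_{X_L,\Phi_L}\bigl(f - A_{L-1}(f)\bigr)$, with $A_0 = 0$. For $L=1$ the formula reads $A_1(f) = \sum_{i=1}^{N_1} f(\xx_i)\chi_i^{(1)} = I_{X_1,\Phi_1}(f)$, which is the definition.

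For the induction step, assume \eqref{rep2} holds for $L-1$. Expand the correction term as
\[
I_{X_L,\Phi_L}\bigl(f - A_{L-1}f\bigr) = \sum_{i=1}^{N_L}\bigl[f(\xx_i) - A_{L-1}f(\xx_i)\bigr]\chi_i^{(L)}.
\]
The key observation is that $A_{L-1}f$ interpolates $f$ on $X_{L-1}$, by the interpolation lemma of \cref{sec:MultilevelMethod}. Hence the coefficients vanish for $1\le i\le N_{L-1}$, and only the indices $N_{L-1}+1\le i\le N_L$ survive. Adding this to the induction hypothesis for $A_{L-1}(f)$ gives \eqref{rep2}. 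The only subtle point is the use of nestedness: the points of $X_{L-1}$ are exactly the first $N_{L-1}$ points of $X_L$, so the vanishing coefficients are exactly those indices.

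For the space decomposition, \eqref{rep2} shows directly that
\[
\widetilde{V}_L \subseteq \widetilde{V}_{L-1} + \operatorname{span}\{\chi_i^{(L)} : N_{L-1}+1\le i\le N_L\}.
\]
Indeed, $A_{L-1}(f)\in\widetilde V_{L-1}$ collects the terms with $\ell\le L-1$, since by \eqref{rep2} at level $L-1$ these terms are exactly $A_{L-1}(f)$. For the reverse inclusion, $\widetilde V_{L-1}\subseteq\widetilde V_L$ holds by \cref{cor:NestedImageSpaces}. Moreover, each new $\chi_i^{(L)}$ with $i>N_{L-1}$ lies in $\widetilde V_L$, because $A_L(\chi_i^{(L)})=\chi_i^{(L)}$. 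To see this, note that $\chi_i^{(L)}$ vanishes on $X_{L-1}$, so $A_{L-1}(\chi_i^{(L)})=0$, and then the recursion gives $A_L(\chi_i^{(L)}) = I_{X_L,\Phi_L}(\chi_i^{(L)}) = \chi_i^{(L)}$.

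It remains to show that the sum is direct, which we expect to be the only step needing care. Directness follows from a dimension count: $\dim\widetilde V_L = N_L$ by \cref{thm:basis1}, $\dim\widetilde V_{L-1}=N_{L-1}$, and the new span has at most $N_L-N_{L-1}$ elements. Therefore the sum can only reach dimension $N_L$ if it is direct and the new $\chi_i^{(L)}$ are linearly independent. The latter is also clear since they are part of the basis of $W_L$. As a byproduct, $\widetilde{\cB}$ is a basis of $\widetilde V_L$.
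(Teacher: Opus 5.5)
Your proof is correct and follows the paper's route. It uses the same induction via $A_L(f)=A_{L-1}(f)+I_{X_L,\Phi_L}\bigl(f-A_{L-1}(f)\bigr)$ together with $A_{L-1}f|_{X_{L-1}}=f|_{X_{L-1}}$, followed by a dimension comparison with $\dim\widetilde V_L=N_L$ from \cref{thm:basis1}. Your explicit reverse inclusion (via \cref{cor:NestedImageSpaces} and $A_L(\chi_i^{(L)})=\chi_i^{(L)}$) is valid but not needed: $\widetilde V_L\subseteq\widetilde V_{L-1}+\operatorname{span}\{\chi_i^{(L)} : N_{L-1}+1\le i\le N_L\}$, and the right-hand side has dimension at most $N_{L-1}+(N_L-N_{L-1})=N_L=\dim\widetilde V_L$, which already forces equality and directness, as in the paper.
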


\begin{proof}
 We prove the representation (\ref{rep2})  again by induction on $L$. For 
 $L =1 $ we have $A_1(f) = I_{X_1,\Phi_1}f$ and hence the statement. 
 
 For $ L \geq 2 $ we again use the definition of the multilevel
 operator to conclude
 \begin{align}
   A_L(f) &= A_{L-1}(f) + I_{X_L, \Phi_L}(f - A_{L-1}(f)) \nonumber\\
   &= A_{L-1}(f) + \sum_{i=1}^{N_L} \left( f(\xx_i) - 
   A_{L-1}(f)(\xx_i) \right) \chi^{(L)}_{i}\nonumber\\
   &= A_{L-1}(f) + \sum_{i=N_{L-1}+1}^{N_L} \left(f(\xx_i)- 
   A_{L-1}(f)(\xx_i) \right) \chi^{(L)}_{i},\label{eq:RecursiveRepresentationAj}
 \end{align}
 where we used that $A_{L-1}f|X_{L-1}= f|X_{L-1}$ in the last
 step. The induction hypothesis now leads to the representation
 (\ref{rep2}). Moreover, this shows that $\widetilde{\cB}$
spans $\widetilde{V}_L$ and a comparison of the dimensions shows that
$\widetilde{\cB}$ is even a basis of $\widetilde{V}_L$. This also
means that the stated sum of subspaces is direct.
\end{proof}

Additionally, \eqref{eq:RecursiveRepresentationAj} allows us to derive
a recursive representation of $ b_i $, where the recursion is done
over the levels. To emphasize the level-dependence,
we will write $ b_i^{(\ell)} $ for the $i$-th basis function of the
multilevel cardinal basis $\cB_\ell := \{b_i^{(\ell)} : 1\le i\le
N_\ell\}$ of  $ \widetilde{V}_{\ell} $. 

\begin{theorem}\label{thrm:RecursiveRepresentationBi}
The basis functions $ b_i^{(L)}\in\cB_L $ of the multilevel cardinal basis
satisfy 
\[
  b_i^{(L)} 
  = \begin{cases}
    \displaystyle b_i^{(L-1)} - \sum_{j = N_{L-1}+1}^{N_L} b_i^{(L-1)}(\xx_j)
    \chi^{(L)}_j & \mbox{for }   1 \leq i \leq N_{L-1},\\
    \chi_i^{(L)}&  \mbox{for }   N_{L-1}+1 \leq i \leq N_L.
  \end{cases}
  \]
\end{theorem}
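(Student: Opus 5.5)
The plan is to apply the recursive identity \eqref{eq:RecursiveRepresentationAj} from the proof of \cref{thrm:NewtonBasis} to the data $\ee_i^{(L)}$, or equivalently to $f=\chi_i^{(L)}$, because $b_i^{(L)}=A_L(\chi_i^{(L)})$. That identity states that, for every $f\in C(\Omega)$,
\[
A_L(f)=A_{L-1}(f)+\sum_{j=N_{L-1}+1}^{N_L}\bigl(f(\xx_j)-A_{L-1}(f)(\xx_j)\bigr)\chi_j^{(L)}.
\]
Both $A_L$ and $A_{L-1}$ depend only on point values, on $X_L$ and $X_{L-1}$ respectively. The key preliminary step is therefore to identify $A_{L-1}(\chi_i^{(L)})$. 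Since $\chi_i^{(L)}|X_{L-1}$ equals the $i$-th unit vector of $\R^{N_{L-1}}$ when $i\le N_{L-1}$, and is the zero vector when $i>N_{L-1}$, we get $A_{L-1}(\chi_i^{(L)})=b_i^{(L-1)}$ in the first case and $A_{L-1}(\chi_i^{(L)})=0$ in the second. The second case uses linearity of $A_{L-1}$.

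For $1\le i\le N_{L-1}$, insert $f=\chi_i^{(L)}$ into the identity. For $j\ge N_{L-1}+1$ we have $j\neq i$, so $\chi_i^{(L)}(\xx_j)=0$. Hence each coefficient reduces to $-b_i^{(L-1)}(\xx_j)$, which gives exactly the first line of the claim.

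For $N_{L-1}+1\le i\le N_L$, the term $A_{L-1}(\chi_i^{(L)})$ vanishes, so the coefficients become $\chi_i^{(L)}(\xx_j)=\delta_{ij}$. The sum therefore collapses to $\chi_i^{(L)}$. This is consistent with \cref{thrm:Bi} for $m=L$, where the sum over $\ell$ is empty.

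The only delicate point is justifying that $A_{L-1}$ applied to $\chi_i^{(L)}$ depends only on the restriction to $X_{L-1}$, which rests on nestedness. This can be handled either by quoting the remark that $A_\ell$ only uses $f|X_\ell$, or by invoking \cref{lem:InterpolationLagrangeFunc} levelwise. No genuine obstacle is expected.
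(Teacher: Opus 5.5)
Your proposal is correct and is essentially the paper's proof: both rest on the recursion \eqref{eq:RecursiveRepresentationAj}. The paper substitutes $A_{L-1}(f)=\sum_{i=1}^{N_{L-1}} f(\xx_i)\,b_i^{(L-1)}$ for a general $f$ and compares coefficients with $A_L(f)=\sum_{i=1}^{N_L} f(\xx_i)\,b_i^{(L)}$, which amounts to your direct evaluation at $f=\chi_i^{(L)}$ carried out for all $i$ at once.
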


\begin{proof}
Let $f \in C(\Omega)$ be fixed. We use the representation
\eqref{eq:RecursiveRepresentationAj} of $A_L(f)$ to derive
\begin{align*}
  A_L(f) &= A_{L-1}(f) + \sum_{j=N_{L-1}+1}^{N_L} \left(f(\xx_j)- 
  A_{L-1}(f)(\xx_j) \right) \chi^{(L)}_{j} \\
  &= \sum_{i=1}^{N_{L-1}} f(\xx_i) b_i^{(L-1)} 
  + \sum_{j=N_{L-1}+1}^{N_L} \left( f(\xx_j) - \sum_{i=1}^{N_{L-1}}
  f(\xx_i) b_i^{(L-1)}(\xx_j) \right) \chi_j^{(L)} \\ 
  &= \sum_{i=1}^{N_{L-1}} f(\xx_i) \left( b_i^{(L-1)} -
  \sum_{j=N_{L-1}+1}^{N_L} b_i^{(L-1)}(\xx_j) \chi_j^{(L)} \right) +
   \sum_{i=N_{L-1}+1}^{N_L} f(\xx_i) \chi_i^{(L)}.
 \end{align*}
A comparison to $ A_L(f) = \sum_{i=1}^{N_L} f(\xx_i) b_i^{(L)} $ finishes the proof.
\end{proof}

\subsection{Exponential Decay of the Bases}\label{subsec:ExponentialDecay}

It is well-established that the cardinal functions $ \chi_i^{(\ell)} $
of $W_\ell$ decay exponentially with growing $ \| \xx - \xx_i
\|_2/q_\ell $. A precise formulation  for nested data sets is in the
next theorem, though the nestedness is not really necessary. For its
proof we refer to  \cite[Theorem 2.3]{DeMarchi-Wendland-20-1}. 

\begin{theorem}\label{thrm:ExponentialDecayLagrangeFunctions}
 Let $ \Phi $ be a compactly supported reproducing kernel of $ H^{\sigma}(\R^d) $,
i.e., its Fourier transform satisfies \eqref{eq:AlgebraicDecay}.  Let
$ X_1\subseteq X_2\subseteq \cdots \subseteq X_L \subseteq \R^d $ be
a family of quasi-uniform sets 
 of data sites with fill distances $ h_1, \ldots, h_L $. For $ 1 \leq \ell \leq
 L $, let $ \Phi_{\ell} = \Phi_{\delta_\ell} $ be defined by
\eqref{eq:RescaledKernel} with $ \delta_\ell = \nu h_\ell $, with $ \nu
\geq 1 $.
  Then, there are constants $ C_1 > 0 $ and $ \eta > 0 $ that
 are independent of $ \ell $, such that the bound  
\[
  | \chi_i^{(\ell)}(\xx) | \leq C_1 e^{- \eta \frac{\| \xx - \xx_i
      \|_2}{q_\ell}}, \qquad \xx\in\R^d
\]
holds for all  $ 1 \leq \ell \leq L $ and $ 1 \leq i
\leq N_\ell $.

In particular, the basis functions of the Newton-type basis $\cB_{\operatorname{full}}$ of $V_L$ and the basis
functions from the Newton-type basis $\widetilde{\cB}$ of
$\widetilde{V}_{L}$ enjoy this type of exponential decay.
\end{theorem}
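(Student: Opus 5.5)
The main bound is taken from \cite[Theorem 2.3]{DeMarchi-Wendland-20-1}, as the paper indicates. The work here is to check that its hypotheses hold uniformly in $\ell$, and then to read off the statement about the two bases. The natural tool is scaling. On level $\ell$ I rescale coordinates by $\yy = \xx/\delta_\ell$. Then $\Phi_\ell(\xx-\xx_i) = \delta_\ell^{-d}\Phi((\xx-\xx_i)/\delta_\ell)$, so the cardinal function $\chi_i^{(\ell)}$ becomes the cardinal function for the fixed kernel $\Phi$ on the scaled point set $\widetilde X_\ell := X_\ell/\delta_\ell$. The factor $\delta_\ell^{-d}$ cancels in $\rr_\ell^{\transpose} M_\ell^{-1}$.

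The scaled set has separation radius $q_\ell/\delta_\ell = q_\ell/(\nu h_\ell) \in [1/(\nu c_{qu}), 1/\nu]$ by quasi-uniformity. Its fill distance is $h_\ell/\delta_\ell = 1/\nu$. Both are bounded above and below by constants independent of $\ell$.

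The decay estimate for a single kernel and a single point set proceeds as follows. Since $\Phi$ is a reproducing kernel of $H^\sigma(\R^d)$, the smallest eigenvalue of $M$ for the scaled points is bounded below in terms of the scaled separation radius only. This uses the standard Fourier-based lower bound, which follows from the lower bound in \eqref{eq:AlgebraicDecay}. The entries of $M$ are bounded, and $M$ is banded: each row has at most $K$ nonzero entries, with $K$ controlled by a packing argument from $1/q$ measured in units of the scaled support radius $1$. A banded symmetric positive definite matrix with spectrum in $[\lambda_{\min},\lambda_{\max}]$ has an inverse whose entries decay exponentially in the graph distance. This is the Demko--Moss--Smith result. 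The graph distance between $\xx_i$ and $\xx_j$ is at least $\|\xx_i-\xx_j\|_2/\delta_\ell$. Hence $|(M_\ell^{-1})_{ij}|\,\delta_\ell^{-d} \le C \rho^{\|\xx_i-\xx_j\|_2/\delta_\ell}$ with $\rho<1$ independent of $\ell$.

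Next I write $\chi_i^{(\ell)}(\xx) = \sum_j \Phi_\ell(\xx-\xx_j)(M_\ell^{-1})_{ji}$. Only $j$ with $\|\xx-\xx_j\|_2\le\delta_\ell$ contribute, and there are boundedly many such $j$. Each of these satisfies $\|\xx_j-\xx_i\|_2 \ge \|\xx-\xx_i\|_2-\delta_\ell$. This gives $|\chi_i^{(\ell)}(\xx)| \le C e^{-\eta'\|\xx-\xx_i\|_2/\delta_\ell}$. Finally, $\delta_\ell \le \nu c_{qu} q_\ell$ converts this into the stated form with $\eta = \eta'/(\nu c_{qu})$.

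The main obstacle is uniformity in $\ell$. All constants ($\lambda_{\min}$, $\lambda_{\max}$, the bandwidth $K$, $\rho$, and the local count) must depend only on $\Phi,\nu,c_{qu},d$. The scaling argument above is precisely what delivers this. Nestedness plays no role.

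For the final sentence of the statement, observe that both $\cB_{\operatorname{full}}$ and $\widetilde{\cB}$ consist solely of functions $\chi_i^{(\ell)}$ with $1\le \ell\le L$, centered at $\xx_i\in X_\ell$. The bound therefore applies directly, with decay measured in units of $q_\ell$ for the level on which each function lives.
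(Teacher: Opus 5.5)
Your proposal is correct, but it does much more than the paper. The paper gives no argument: it refers to \cite[Theorem 2.3]{DeMarchi-Wendland-20-1} for the bound and treats the statement about $\cB_{\operatorname{full}}$ and $\widetilde{\cB}$ as immediate, exactly as in your last paragraph.

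You instead reconstruct a self-contained proof in four steps:
\begin{itemize}
\item rescale to the fixed kernel $\Phi$ on $X_\ell/\delta_\ell$, where the factor $\delta_\ell^{-d}$ indeed cancels;
\item bound the spectrum of the scaled matrix uniformly in $\ell$: $\lambda_{\min}$ from the Fourier lower bound in \eqref{eq:AlgebraicDecay}, since the scaled separation radius stays in $[1/(\nu c_{qu}),1/\nu]$, and $\lambda_{\max}\le K\Phi(0)$ by Gershgorin;
\item deduce exponential off-diagonal decay of $M^{-1}$;
\item sum locally over the boundedly many centres within $\delta_\ell$ of $\xx$.
\end{itemize}
The citation buys brevity. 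Your route buys transparency: it shows that $C_1$ and $\eta$ depend only on $\Phi$, $d$, $\sigma$, $\nu$ and $c_{qu}$, with $\eta=\eta'/(\nu c_{qu})$. That level-independence is what the proof of \cref{thm:decay2} relies on. It also confirms the paper's remark that nestedness plays no role.

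The one imprecision is the word ``banded''. For $d\ge 2$ there is in general no ordering of the centres that makes $M_\ell$ banded with bandwidth independent of $N_\ell$, so the index-distance form of Demko--Moss--Smith does not apply verbatim. What you need, and in fact use, is the graph-distance version. It follows from the same Chebyshev approximation $p_k$ of $t\mapsto 1/t$ on $[\lambda_{\min},\lambda_{\max}]$, because $(p_k(M))_{ij}=0$ whenever the graph distance between $i$ and $j$ exceeds $k$. Together with your observation that every edge has scaled length at most $1$, this gives the decay in $\|\xx_i-\xx_j\|_2/\delta_\ell$ that you claim.
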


Following \eqref{eq:NewRepresentationBi}, the multilevel cardinal basis
$ \cB=\{b^{(L)}_i : 1\le i\le N_L\} $ of $\widetilde{V}_L$ is closely related to
the Newton-type basis $\widetilde{\cB}$. In particular, for  $ N_{L-1}+1 \leq i \leq N_L $,  the basis
function $b_i^{(L)} $ is even equal to $ \chi^{(L)}_i$.  Hence,  we expect
to obtain  a similar exponential decay  for the basis functions from $\cB$.

\begin{theorem} \label{thm:decay2} With the notation and assumptions of
 \cref{thrm:ExponentialDecayLagrangeFunctions}, fix $ L \in \N
 $. Then, there is a constant $C=C_L>0$ such that for all $1\le m\le
 L$ and $N_{m-1}+1\le i\le N_m$ the estimate
 \[
| b^{(L)}_i(\xx)| \le C_L e^{-\frac{\eta}{q_m} \| \xx -
  \xx_i \|_2}, \qquad \xx\in\R^d,
\]
holds, where $\eta>0$ is the constant from
\cref{thrm:ExponentialDecayLagrangeFunctions}.
\end{theorem}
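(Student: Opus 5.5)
We prove the bound by induction on $L$ (the number of levels), using the recursive representation of \cref{thrm:RecursiveRepresentationBi}. Throughout, the constant $C_L$ may grow with $L$, and we only need the decay rate $\eta/q_m$ tied to the level $m$ at which $\xx_i$ is introduced. Quasi-uniformity and the nestedness $X_m\subseteq X_\ell$ give $q_\ell\le q_m$ for $\ell\ge m$. Hence $e^{-\eta\|\xx-\xx_i\|_2/q_\ell}\le e^{-\eta\|\xx-\xx_i\|_2/q_m}$, so a Lagrange function of a finer level decays at least as fast as the rate we claim.

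For the base case $L=1$ we have $b_i^{(1)}=\chi_i^{(1)}$, and \cref{thrm:ExponentialDecayLagrangeFunctions} gives the claim with $C_1$. For the step, fix $L\ge2$. If $N_{L-1}+1\le i\le N_L$, then $m=L$ and $b_i^{(L)}=\chi_i^{(L)}$, so the claim follows directly from \cref{thrm:ExponentialDecayLagrangeFunctions}.

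If $i\le N_{L-1}$, so $m\le L-1$, we write
\[
b_i^{(L)} = b_i^{(L-1)} - \sum_{j=N_{L-1}+1}^{N_L} b_i^{(L-1)}(\xx_j)\chi_j^{(L)}.
\]
The first term is bounded by $C_{L-1}e^{-\eta\|\xx-\xx_i\|_2/q_m}$ by the induction hypothesis. For the sum, the induction hypothesis at the points $\xx_j$ and \cref{thrm:ExponentialDecayLagrangeFunctions} give
\[
\Bigl|\sum_j b_i^{(L-1)}(\xx_j)\chi_j^{(L)}(\xx)\Bigr|\le C_{L-1}C_1\sum_{j} e^{-\frac{\eta}{q_m}\|\xx_j-\xx_i\|_2}\,e^{-\frac{\eta}{q_L}\|\xx-\xx_j\|_2}.
\]
Since $q_L\le q_m$, the second exponential is at most $e^{-\frac{\eta}{q_m}\|\xx-\xx_j\|_2}$, so each summand is bounded by
\[
e^{-\frac{\eta}{q_m}(\|\xx_j-\xx_i\|_2+\|\xx-\xx_j\|_2)}\le e^{-\frac{\eta}{q_m}\|\xx-\xx_i\|_2}
\]
by the triangle inequality. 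However, this leaves a factor equal to the number of summands, which is not uniformly bounded.

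The main obstacle is therefore to control this sum over $j$ without losing the decay rate $\eta/q_m$. The crude bound above gives the estimate with the factor $N_L-N_{L-1}$. Since $L$ is fixed and the point sets are fixed, this factor is finite, and it could be absorbed into $C_L$; the statement only asserts some constant $C=C_L$. If one wants the constant independent of the cardinalities, I would instead keep a fraction of the finer decay. Write
\[
e^{-\frac{\eta}{q_L}\|\xx-\xx_j\|_2}\le e^{-\frac{\eta}{q_m}\|\xx-\xx_j\|_2}\, e^{-\eta\left(\frac1{q_L}-\frac1{q_m}\right)\|\xx-\xx_j\|_2}.
\]
Since $q_L\le c\,\mu^{L-m}q_m$ by quasi-uniformity and the refinement condition, the extra factor is $e^{-\eta'\|\xx-\xx_j\|_2/q_L}$ with $\eta'>0$. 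The sum $\sum_j e^{-\eta'\|\xx-\xx_j\|_2/q_L}$ is then bounded independently of $\xx$ and of $N_L$ by the standard packing argument for $q_L$-separated points: count the points in annuli of width $q_L$, which contain $\mathcal{O}(k^{d-1})$ points each. This gives
\[
|b_i^{(L)}(\xx)|\le \bigl(C_{L-1}+C_{L-1}C_1C'\bigr)e^{-\frac{\eta}{q_m}\|\xx-\xx_i\|_2},
\]
so the recursion $C_L=C_{L-1}(1+C_1C')$ closes the induction. The one place requiring care is the degenerate case $\mu^{L-m}$ close to $1$, which cannot occur since $\mu<1$ and $L>m$ here.
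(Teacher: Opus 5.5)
Your argument is essentially the paper's proof: induction on $L$ through the recursion of \cref{thrm:RecursiveRepresentationBi}, the triangle inequality, splitting off the extra factor $e^{-\eta(1/q_L-1/q_m)\|\xx-\xx_j\|_2}$, and counting the points of $X_L$ in annuli of width $q_L$, which closes with $C_L=C_{L-1}(1+C_1\widetilde{C}_L)$. One correction: the assumptions give only $q_L\le h_L\le\mu^{L-m}h_m\le c_{qu}\mu^{L-m}q_m$ with $c_{qu}\ge 1$, not a bound involving the refinement constant $c\le 1$, so $\eta'=\eta(1-q_L/q_m)>0$ is guaranteed only when $c_{qu}\mu^{L-m}<1$ (for every $m$ precisely when $c_{qu}\mu<1$, which is the paper's own caveat); otherwise $q_L=q_m$ can genuinely occur, and it is your crude count absorbing $N_L-N_{L-1}$ into $C_L$ that proves the statement in general, a degenerate case the paper's finite $\widetilde{C}_L$ tacitly excludes.
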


\begin{proof}
  The proof is once again by induction on the level $L$. For $L=1$
  we note that we must have $m=1$ and $1\le i\le N_1$. Thus, we have
  $b_i^{(1)} = \chi_i^{(1)}$ and thus
  \cref{thrm:ExponentialDecayLagrangeFunctions} shows  the statement.

  For the inductions step, we assume that the result is correct for
  $L-1$ and thus   $1\le m\le L-1$. For level $L$ we might have $m=L$,
  which then again leads to $b_{i}^{(L)} = \chi_i^{(L)}$ for $N_{L-1}+1
  \le i\le N_L$ and Theorem \ref{thrm:ExponentialDecayLagrangeFunctions}  
  gives the desired statement. If $m\le L-1$, we can use the recursion
  \begin{equation}\label{bi1}
  b_i^{(L)}(\xx) = b_i^{L-1}(\xx) - \sum_{j=N_{L-1}+1}^{N_L}
    b_i^{(L-1)}(\xx_i) \chi_j^{(L)}(\xx)
    \end{equation}
    together with the induction assumption for bounding  $b_i^{(L-1)}$
    and \cref{thrm:ExponentialDecayLagrangeFunctions} for bounding
    $\chi_{j}^{(L)}$.
    This yields first of all
    \begin{align*}
      \left| b_i^{(L-1)}(\xx_i) \chi_j^{(L)}(\xx)\right|  & \le
      C_1C_{L-1}  e^{-\frac{\eta}{q_m}\|\xx_i-\xx_j\|_2}
      e^{-\frac{\eta}{q_L}\|\xx-\xx_j\|_2}\\
      &= C_1C_{L-1}
      e^{-\frac{\eta}{q_m}\left(\|\xx_i-\xx_j\|_2+\|\xx-\xx_j\|_2\right)}
      e^{-\|\xx-\xx_j\|_2\left(\frac{\eta}{q_L}-\frac{\eta}{q_m}
        \right)}\\
      &\le C_1C_{L-1}e^{-\frac{\eta}{q_m}\|\xx-\xx_i\|_2}
      e^{-\|\xx-\xx_j\|_2\left(\frac{\eta}{q_L}-\frac{\eta}{q_m} \right)}.
      \end{align*}
    Next, we turn to the sum over these terms. The above estimates
    immediately lead to
    \[
      \sum_{j=N_{L-1}+1}^{N_L}
      \left| b_i^{(L-1)}(\xx_i) \chi_j^{(L)}(\xx)\right| \le
      C_1C_{L-1} e^{-\frac{\eta}{q_m}\|\xx-\xx_i\|_2}
      \sum_{j=N_{L-1}+1}^{N_L}
      e^{-\|\xx-\xx_j\|_2\left(\frac{\eta}{q_L}-\frac{\eta}{q_m}
        \right)}
      \]
      and we now need to show that the latter sum can be bounded by a constant.
    To this end, we
    introduce $E_n = \{\yy\in\R^d : nq_L \le \|\xx-\yy\|_2<(n+1)
    q_L\}$ for $n\in \N_0$ and note that any $\xx_i\in X_L$ must be
    contained in exactly one of these $E_n$. Moreover, a simple
    volume argument  shows $|X_L\cap E_0|\le 2^d$ and, as in the proof of \cite[Theorem
      12.3]{Wendland-05-1},  $|X_L\cap E_n|\le 3^d  n^{d-1}$ for $n\ge
    1$. Thus, we find
    \begin{align*}
     \sum_{j=N_{L-1}+1}^{N_L}
     e^{-\|\xx-\xx_j\|_2\left(\frac{\eta}{q_L}-\frac{\eta}{q_m}\right)}
      & \le \sum_{n=0}^\infty \sum_{\xx_j\in E_n} e^{-nq_L
       \left(\frac{\eta}{q_L}-\frac{\eta}{q_m}\right)}\\
     & \le 2^d +  3^d \sum_{n=1}^\infty n^{d-1} e^{-n\eta\left(1 -
         \frac{q_L}{q_m}\right)}\\
       &\le 2^d +3^d \sum_{n=1}^\infty n^{d-1} e^{-n\eta\left(1 -
         \frac{q_L}{q_{L-1}}\right)} =: \widetilde{C}_L.
    \end{align*}
    Note that our assumptions on $q_\ell$ and $h_\ell$ give the bound
    $q_L/q_{L-1} \le \mu c_{qu}$. Thus, if $\mu c_{qu}<1$ then we can
    choose the constant $\widetilde{C}_L$ even independently of $L$. 

    In any case, plugging this all into \eqref{bi1}, and using the induction hypothesis
on the first term again yields
    \begin{align*}
 |b_i^{(L)}(\xx)| &\le  \left| b_i^{L-1}(\xx)\right|  + \sum_{j=N_{L-1}+1}^{N_L}
 \left| b_i^{(L-1)}(\xx_i) \chi_j^{(L)}(\xx)\right| \\
 &\le C_{L-1} e^{-\frac{\eta}{q_m}\|\xx-\xx_i\|_2} +
 C_1C_{L-1}  \widetilde{C}_L 
 e^{-\frac{\eta}{q_m}\|\xx-\xx_i\|_2}
 = C_L  e^{-\frac{\eta}{q_m}\|\xx-\xx_i\|_2}
    \end{align*}
    with $C_L:=C_{L-1}(1+C_1 \widetilde{C}_L)$.
  \end{proof}

While the basis functions of both $\cB$ and $\widetilde{\cB}$ are now
proven to have an exponential decay there is an important difference
between them. For the basis functions $\chi_i^{(\ell)}$ of
$\widetilde{\cB}$ the constant $C_1>0$ is independent of the level,
while the constant $C_L>0$ for the basis functions $b_i^{(L)}$ from
$\cB$ depends on the highest level $L$. As this does not show up in
numerical calculations, it remains an open problem whether this
constant can also be chosen independently of $L$.

An exponential decay of the basis functions immediately leads to
uniform boundedness of their $\ell_1$-sum, as shown in the next corollary.

\begin{corollary}
The $\ell_1$-norm of the basis functions in $\cB$ and
in $\widetilde{\cB}$ are uniformly bounded. To be more precise there
exist a level independent constant $\widetilde{C}_1>0$ and a level
dependent constant $\widetilde{C}_L>0$ such that
\[
\sum_{j=1}^N |b_j^{(L)}(\xx)| \le \widetilde{C}_L, \qquad
\sum_{\ell=1}^L
\sum_{i=N_{\ell-1}+1}^{N_\ell}|\chi_i^{(\ell)}(p\xx)|\le
\widetilde{C}_1
\]
for all $\xx\in \R^d$.
\end{corollary}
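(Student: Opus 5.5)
The plan is to reduce both bounds to the same lattice-counting argument that appears in the proof of \cref{thm:decay2}. Fix $\xx\in\R^d$ and a level $\ell$. Put $E_n^{(\ell)}=\{\yy\in\R^d : nq_\ell\le\|\xx-\yy\|_2<(n+1)q_\ell\}$ for $n\in\N_0$. The volume argument from that proof, applied to the $q_\ell$-separated set $X_\ell$, gives $|X_\ell\cap E_0^{(\ell)}|\le 2^d$ and $|X_\ell\cap E_n^{(\ell)}|\le 3^d n^{d-1}$ for $n\ge1$. Combining this with an exponential bound of the form $C e^{-\eta\|\xx-\xx_i\|_2/q_\ell}$ yields
\[
\sum_{\xx_i\in X_\ell} C e^{-\eta\|\xx-\xx_i\|_2/q_\ell}\le C\Bigl(2^d+3^d\sum_{n=1}^\infty n^{d-1}e^{-\eta n}\Bigr)=:C\,S,
\]
where $S$ depends only on $d$ and $\eta$, and in particular not on $\ell$ or $\xx$.

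For the multilevel cardinal basis $\cB$, split the index range $1\le j\le N_L$ into the blocks $N_{m-1}+1\le j\le N_m$ for $m=1,\dots,L$. On block $m$, \cref{thm:decay2} gives $|b_j^{(L)}(\xx)|\le C_L e^{-\eta\|\xx-\xx_j\|_2/q_m}$. Since $\{\xx_j : N_{m-1}+1\le j\le N_m\}\subseteq X_m$, the counting bound at level $m$ controls each block by $C_L S$. Summing over the $L$ blocks gives $\widetilde{C}_L:=L\,C_L\,S$, which is level dependent. This is acceptable, because $C_L$ is level dependent anyway.

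For $\widetilde{\cB}$, \cref{thrm:ExponentialDecayLagrangeFunctions} gives $|\chi_i^{(\ell)}(\xx)|\le C_1 e^{-\eta\|\xx-\xx_i\|_2/q_\ell}$ with $C_1$ independent of $\ell$. The same counting bounds each level's contribution by $C_1 S$, so naively the total is $L\,C_1 S$. This is the main obstacle: the crude count grows linearly in $L$, and the claim asks for a level-independent $\widetilde{C}_1$.

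To remove the factor $L$, I would use the fact that the points added at level $\ell$, namely $X_\ell\setminus X_{\ell-1}$, are themselves $q_\ell$-separated. By quasi-uniformity and nestedness they also keep distance at least $2q_{\ell-1}\ge 2h_{\ell-1}/c_{qu}$ from $X_{\ell-1}$. Hence, for the level $\ell$ at which $\xx$ first lies within distance roughly $h_\ell$ of a new point, the new points of coarser levels are at distance $\gtrsim h_k$ from $\xx$. Their contributions are then bounded by $C_1 S e^{-\eta c\,h_k/q_k}$, which is only a constant per level. So this heuristic alone does not obviously sum either.

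I would therefore first try to obtain geometric decay in the level index. One route is to combine $q_k\ge h_k/c_{qu}$ with the geometric decay $h_k\le\mu^{k-\ell}h_\ell$. The aim is to show that, for levels far finer than the scale of $\operatorname{dist}(\xx,X_k\setminus X_{k-1})$, the nearest new point is at distance $\gtrsim h_{k-1}$, which produces a factor $e^{-\eta c/(\mu c_{qu})}$ per such level. If this fails, the fallback is to accept $\widetilde{C}_1$ growing at most linearly in $L$. In that case I would flag the uniformity claim as the delicate point and check whether the intended statement allows a factor $L$.
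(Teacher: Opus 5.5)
Your argument for $\cB$ is correct. Counting block $m$ in shells of width $q_m$ is legitimate, because $\{\xx_j : N_{m-1}+1\le j\le N_m\}\subseteq X_m$ is $q_m$-separated, and it gives $\sum_j|b_j^{(L)}(\xx)|\le L\,C_L S$. Your bound $L\,C_1 S$ for $\widetilde{\cB}$ is also correct. The step you could not carry out, removing the factor $L$, cannot be carried out at all: the level-independent constant $\widetilde{C}_1$ claimed in the statement does not exist in general.

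The paper obtains $\widetilde{C}_1$ by weakening $e^{-\eta\|\xx-\xx_i\|_2/q_\ell}$ to $e^{-\eta\|\xx-\xx_i\|_2/q_1}$. It then counts \emph{all} points of $X_L$ in shells of width $q_1$ using the bound $3^d n^{d-1}$. That bound requires separation of order $q_1$, but $X_L$ is only $q_L$-separated. Such a shell can therefore contain on the order of $(q_1/q_L)^d n^{d-1}$ points, a factor that grows geometrically in $L$. So for $\cB$, your block-wise count is actually sharper than a corrected version of the paper's argument.

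Here is a counterexample in the paper's setting.
\begin{itemize}
\item Take $d=1$ and $X_\ell=\{k2^{-\ell}:0\le k\le 2^\ell\}\subseteq[0,1]$. Then $h_\ell=q_\ell=2^{-\ell-1}$, $\mu=1/2$, and the points new at level $\ell$ are the odd multiples of $2^{-\ell}$.
\item Fix an integer $g\ge2$ and set $x=\sum_{k\ge1}2^{-kg}$. The partial sum $y_j=\sum_{k=1}^{j}2^{-kg}$ is an odd multiple of $2^{-jg}$, hence a new point of level $jg$. Moreover $|x-y_j|\le 2^{1-(j+1)g}=2^{2-g}q_{jg}$.
\item With stationary scaling $\delta_\ell=\nu h_\ell$, the rescaled Lagrange functions $\chi_i^{(\ell)}(\delta_\ell\,\cdot)$ are equicontinuous uniformly in $\ell$ and $i$. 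The rescaled kernel matrices have smallest eigenvalue bounded below in terms of $\Phi$ and $\nu$ only, only boundedly many translates overlap at any point, and $\Phi$ is uniformly continuous.
\item The Lagrange function of $y_j$ on level $jg$ equals $1$ at $y_j$, and $|x-y_j|/\delta_{jg}\le 2^{2-g}/\nu$. So for $g$ large enough it is at least $1/2$ at $x$, for every $j$.
\end{itemize}
Hence
\[
\sum_{\ell=1}^{L}\sum_{i=N_{\ell-1}+1}^{N_\ell}|\chi_i^{(\ell)}(x)|\ge \tfrac12\lfloor L/g\rfloor .
\]
This is the same mechanism that makes the Lebesgue constant of the hierarchical hat basis grow linearly in the number of levels.

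Your attempted improvement also rests on a false premise. A point new at level $\ell$ is only guaranteed to lie at distance $2q_\ell$ from $X_{\ell-1}$, not $2q_{\ell-1}$, because $q_{\ell-1}$ constrains only pairs inside $X_{\ell-1}$. In the example above the new points are exactly the midpoints. In fact every point lies within $2h_\ell$ of a new point at every level, which is precisely what produces the growth.

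So your fallback is the right conclusion. The Newton-type sum is bounded by $L\,C_1S$, and this linear dependence on $L$ is sharp. The statement holds as written for $\cB$, but for $\widetilde{\cB}$ only in this weakened, $L$-dependent form.
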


\begin{proof}
As the proof is the same for both bases, we let $\phi_i$ be either
$\chi_i^{(\ell)}$ or $b_i^{(L)}$ for some $N_{\ell-1}+1\le i\le N_\ell$
and $1\le \ell\le L$. Then, the exponential decay gives
\[
|\phi_i(\xx)| \le C e^{-\frac{\eta}{q_\ell}\|\xx-\xx_i\|_2} \le C
e^{-\frac{\eta}{q_1}\|\xx-\xx_i\|_2} , \qquad \xx\in\R^d.
\]
This time, letting $E_n = \{\yy\in\R^d : nq_1 \le \|\xx-\yy\|_2< (n+1)q_1\}$
yields, as in the proof of Theorem \ref{thm:decay2},
\[
  \sum_{i=1}^{N_L} |\phi_i(\xx)| \le C \sum_{n=0}^\infty \sum_{\xx_i\in
    E_n} e^{-\eta n} \le C\left(2^d + \sum_{n=0}^\infty n^{d-1}e^{-\eta
      n}\right),
    \]
where the sum is clearly a level-independent constant. Hence, the overall
constant is only level-dependent for the multilevel cardinal basis but
not the Newton-type basis.    
\end{proof}

Note that the result on $\cB$ means that the associated Lebesgue function, which
is simply the $\ell_1$-norm of the basis functions in the case of a
cardinal basis and which coincides with $\|A_L\|_{L_\infty\to
  L_\infty}$ is bounded by $\widetilde{C}_L$. Unfortunately, again the
question arises whether this can be done with a constant that is
independent of the level. Only in that case, it immediately follows that
$A_L(f)$ also converges to $f$ point-wise for even any function $f\in
C(\overline{\Omega})$. 

Similarly, it is possible to show that the basis functions of both bases
are Lipschitz continuous. Following \cite{Hangelbroek-etal-11-1}, this then
leads to  the first step in the direction of showing that $\cB$ is a stable
basis. To be more precise,  for an $f = \sum_{j=1}^{N_L}
a_jb_j^{(L)}$ let $P_\ell f = \sum_{j={N_{\ell-1}+1}}^{N_\ell}a_jb_j^{(L)}$ and
$\aa_\ell=(a_{N_{\ell-1}+1},\ldots, a_{N_\ell})^\transpose$ then the
general theory of \cite{Hangelbroek-etal-11-1} shows that
\[
c_1q_\ell^{d/p} \|\aa_\ell\|_{\ell_p} \le \|P_\ell f\|_{L_p(\Omega)}\le
c_2q_\ell^{d/p} \|\aa_\ell\|_{\ell_p}.
\]
Unfortunately, here the constants $c_1,c_2>0$ depend so far also on the
level $L$. Thus, we leave out the details.

\subsection{Remarks on the Numerical Computation}\label{subsec:NumericalComputations}
From a numerical point of view, computing the cardinal bases
$\chi_{i}^{(\ell)}$ should be avoided if possible. However, using the
multilevel cardinal basis has advantages when it comes to tensorizing
such bases for higher dimensional problems. Hence, we shortly describe
how this can be done and analyze the computational cost. To this end,
we start for a fixed index $i$ with a representation of the from
\[
 b_i = A_L(\ee^{(L)}_i) =  A_L(\chi_i^{(L)}) = \sum_{\ell=1}^{L} \sum_{j = 1}^{N_{\ell}}
 \alpha_j^{(\ell)} \Phi_\ell( \cdot - \xx_j).
\]
The coefficient vectors $ \aalpha^{(m)} $, 
$1 \leq \ell \leq L $, are the unique solutions of the 
block linear system 
\[ 
 \begin{pmatrix}
      M_1 \\
    B_{21} & M_{2}    &        &            & \\
    B_{31} & B_{32} & M_{3}    &            & \\
    \vdots & \vdots & \cdots & \ddots     & \\
    B_{L1} & B_{L2} & \cdots & B_{L(L-1)} & M_L 
   \end{pmatrix}
   \begin{pmatrix}
    \aalpha^{(1)} \\
    \aalpha^{(2)} \\
    \aalpha^{(3)} \\
    \vdots        \\
    \aalpha^{(L)}
   \end{pmatrix}
   = \begin{pmatrix}
      \chi^{(L)}_i|_{X_1} \\
      \chi^{(L)}_i|_{X_2} \\
      \chi^{(L)}_i|_{X_3} \\
      \vdots \\
      \chi^{(L)}_i|_{X_L}
 \end{pmatrix},
\]
where $ M_\ell = ( \Phi_\ell(\xx_{i} - \xx_{j}))_{1 \leq i,j \leq
  N_\ell} $ is the interpolation matrix on level $ \ell $, and
$B_{\ell m} =  
(\Phi_m(\xx_{i} - \xx_{j}))_{\substack{1 \leq i \leq N_{\ell} \\ 1
    \leq j \leq N_m}} $ is the transportation matrix from level $ m $
to level $ \ell > m $. 

For a fixed level $L$, the nestedness of the data sites 
enforces a clear recursive structure. If the anchor $ \xx_i $ of $ b_i
$ appears on the finest level, i.e. $ N_{L-1} +1 \leq i \leq N_L $,
then we only have to solve the subsystem  
$
 M_L \aalpha^{(L)} = \ee_i. 
$
If $ \xx_i $ appears on the second to last level, i.e., $ N_{L-2} + 1
\leq i \leq N_{L-1} $ we have to solve the subsystems on levels  
$L-1$ and $L$. In general, we only have to solve the whole block linear system only for those $ \xx_i $ that appear in the first level. 
Thus, the complexity at level $L$ is governed by this 
recursive coupling, as summarized in the following corollary.

\begin{corollary}
Assume that the sets $ \{X_\ell : 1\le \ell \le L\} $ are
quasi-uniform. Then, the computation of all necessary coefficients of
$\{b_i^{(L)} : 1 \leq i \leq N_L\} $  
can be done in $ \mathcal{O}(N_L^2 \log(N_L))$
time, if the systems can be solved in linear time. Keeping them in memory needs 
\[
 \mathcal{O} \left(\sum_{\ell=1}^{L} (N_\ell - N_{\ell-1})
 \sum_{m=\ell}^L N_m \right)
\]
space.
A single point evaluation of $ A_L(f) $ in nodal
 representation takes
$
  \mathcal{O} (N_L \log(N_L))
$
 time.
\end{corollary}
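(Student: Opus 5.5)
The plan is to count operations level by level, using the block lower-triangular structure displayed above and the assumption that each diagonal system $M_\ell\aalpha=\rr$ can be solved in $\cO(N_\ell)$ time. By quasi-uniformity and the compact support of $\Phi_\ell$ (radius $\delta_\ell=\nu h_\ell$), each row of $M_\ell$ has only $\cO(1)$ nonzero entries. The same volume argument as in the proof of \cref{thm:decay2} gives, for $m<\ell$, that a row of $B_{\ell m}$ has $\cO(1)$ nonzeros. A column of $B_{\ell m}$ has $\cO((h_m/h_\ell)^d)=\cO(N_\ell/N_m)$ nonzeros, so $B_{\ell m}$ has $\cO(N_\ell)$ nonzeros in total. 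Here I use $N_\ell\sim h_\ell^{-d}$, which follows from quasi-uniformity.

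\textbf{Solving for one basis function.} Fix $i$ with $N_{m-1}+1\le i\le N_m$. The right-hand side vanishes on the blocks $\ell<m$, so $\aalpha^{(1)}=\cdots=\aalpha^{(m-1)}=\00$ and only block forward substitution for levels $m,\dots,L$ remains. At level $\ell$ I form $\chi_i^{(L)}|_{X_\ell}-\sum_{k=m}^{\ell-1}B_{\ell k}\aalpha^{(k)}$. Each product costs $\cO(N_\ell)$, and the solve with $M_\ell$ costs $\cO(N_\ell)$, so level $\ell$ costs $\cO((\ell-m+1)N_\ell)$.

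\textbf{Total time.} Since $h_{\ell+1}\le\mu h_\ell$, the counts $N_\ell\sim h_\ell^{-d}$ grow geometrically. Hence $\sum_{\ell=m}^L(\ell-m+1)N_\ell\le L\sum_\ell N_\ell=\cO(L N_L)$, and geometric growth also gives $L=\cO(\log N_L)$. So one basis function costs $\cO(N_L\log N_L)$. Summing over all $N_L$ indices $i$ gives the bound $\cO(N_L^2\log N_L)$.

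\textbf{Memory.} For an index $i$ introduced at level $\ell$, I store the coefficient vectors $\aalpha^{(\ell)},\dots,\aalpha^{(L)}$, which is $\sum_{m=\ell}^L N_m$ numbers. There are $N_\ell-N_{\ell-1}$ such indices, and summing over $\ell$ gives exactly the stated space bound.

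\textbf{Point evaluation.} Write $A_L(f)(\xx)=\sum_{i}f(\xx_i)b_i(\xx)$ and evaluate the $\Phi_\ell(\xx-\xx_j)$ once per level. With the stored coefficients, each $b_i(\xx)$ is a sum over those levels where $\Phi_\ell(\xx-\xx_j)\neq0$, and there are only $\cO(1)$ such $j$ per level. Done naively this gives $\cO(N_L L)$. Better, I precompute the aggregate coefficients $\beta_j^{(\ell)}=\sum_i f(\xx_i)\alpha_{j}^{(\ell)}(i)$ and then evaluate $\sum_{\ell}\sum_j\beta_j^{(\ell)}\Phi_\ell(\xx-\xx_j)$. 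Either way, the evaluation is bounded by $\cO(N_L\log N_L)$ through $L=\cO(\log N_L)$.

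\textbf{Main obstacle.} The delicate step is justifying $\cO(N_\ell)$ cost for the products $B_{\ell k}\aalpha^{(k)}$ uniformly in the level gap. This requires bounding the row sparsity of the transfer matrices independently of $\ell-k$. To do so I would use that the support radius $\delta_k$ of the coarse kernel contains only $\cO(1)$ coarse centers, using the separation $q_k\sim h_k$ and the packing argument used earlier.
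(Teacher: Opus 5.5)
Your proposal is correct and follows the same route as the paper: cost per basis function times $N_L$, counting the stored coefficient vectors $\aalpha^{(\ell)},\dots,\aalpha^{(L)}$, and $\cO(1)$ active kernel translates per level with $L\sim\log N_L$; it also makes explicit what the paper's sketch leaves implicit, namely the vanishing coarse blocks, the row sparsity of $B_{\ell k}$ independent of the level gap, and the geometric growth of $N_\ell$ that keeps the forward substitution at $\cO(N_L\log N_L)$. The only caveat concerns your aggregated coefficients $\beta_j^{(\ell)}$: they pay off only when amortized over many evaluation points, since forming them for a single $\xx$ costs at least as much as the naive $\cO(N_L L)$ sum, which already gives the claim.
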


\begin{proof}
 Assuming that the matrices are build, we have to solve 
 $ N_L $ many multilevel problems. Each takes $ \cO \left(N_L \log(N_L)\right) $ time. 
 The second claim follows directly from the ideas above.
 
 To see the cost of a single point-evaluation, we write $ A_L(f)(\xx) $ as 
\[
  A_L(f)(\xx) = \sum_{i=1}^{N_L} f(\xx_{i}))b_i^{(L)}(\xx) 
  = \sum_{i=1}^{N_L} f(\xx_i) \sum_{\ell=1}^{L} \sum_{j = 1}^{N_{\ell}}
 \alpha_j^{(\ell)} \Phi_\ell( \xx - \xx_j).
\]
The innermost summation is actually only over those indices $j$ with
$\|\xx-\xx_j\|_2\le \delta_\ell$. As the sets are quasi-uniform this can
be done in constant time. 
Together with  $ L \sim \log(N_L) $, we arrive at the claimed cost.
\end{proof}

\begin{remark}
 Both the computation of the coefficients and the point evaluation is
 by a factor of $ N_L $ more expensive than the
 representation of the multilevel operator in
 \Cref{def:MultilevelOperator}. Hence, the  multilevel cardinal basis $\cB$ should only be
 used when a representation with separated sampling and evaluation
 points is required. 
\end{remark}

\section{The Nodal Representation for Non-nested Sets}\label{sec:NonNestedSites}
We now shift our focus on non-nested point sets
$ \{X_\ell :1 \leq \ell \leq L\} $, i.e., we do not assume
$X_{\ell}\subseteq X_{\ell+1}$ but we also do not assume that
$X_\ell\cap X_{m} = \emptyset$ for $\ell\ne m$.

One way of handling this situation is to introduce a new sequence
$\{Y_\ell : 1\le \ell\le L\}$ of nested point sets  
\[
 Y_\ell:= X_1 \cup \cdots \cup X_\ell =: \{\yy_1,\ldots,\yy_{n_\ell}\},
\]
having $n_\ell:= |Y_\ell|$ points.

Then, we can proceed as in the nested case and compute a multilevel
approximation which we denote by $A_L(Y;\cdot)$. Obviously, all
previous results hold for this new sequence of data sets, i.e. we have
a multilevel cardinal basis $b_i:=A_L(Y;\ee_i^{(L)})$, which is a
cardinal basis with respect to $Y_L$, and, similarly, a
Newton-type basis, as well. However, these are not bases for the
approximation spaces $V_\ell$ built with the sequence $\{X_\ell\}$ but
rather for the spaces $V_\ell(Y)$ built with the sequence $\{Y_\ell\}$.

If we want to use the multilevel algorithm with the original sequence
of data sets, we can still derive a basis of the space
$\widetilde{V}_L$ and a representation of $A_L=A_L(X;\cdot)$. We still
have that $A_L$ is linear and uses only the data
$f|X_\ell$ for $1\le \ell\le L$. The latter means that it uses the
data $f|Y_L$. Thus, writing once again $f|Y_L = \sum_{i=1}^{n_L}
f(\yy_i)\ee_i$ with the $i$-th unit vector $\ee_i\in\R^{n_L}$ we see
\[
A_L(X;f) = \sum_{j=1}^{n_L} f(\yy_i) A_L(X;\ee_i).
\]
Thus, we again see that $\widetilde{V}_L = A_L(X;C(\Omega))$ is
spanned by $\cB(Y_L) = \{b_i=A_L(X;\ee_i) : 1\le i\le n_L\}$. We can not expect that 
the so-defined functions $ \{b_i\} $ are cardinal. Nevertheless, we
can still prove some properties of $\widetilde{V}_L$. 

\begin{theorem}
  In the case of  non-nested sets $\{X_\ell\}$ let
  $Y_L=X_1\cup\cdots\cup X_L$ and let $\cB(Y_L) = \{b_i=A_L(X;\ee_i) : 1\le i\le n_L\}$.  
 Then, the multilevel approximation $A_L(X;f)$ to $ f \in C(\Omega) $ is given by 
\[
A_L(X;f) = \sum_{i=1}^N f(\yy_i) b_i
\]
Moreover, $\cB(Y_L)$ forms a basis of $\widetilde{V}_L$, showing
$\dim \widetilde{V}_L=n_L$.
\end{theorem}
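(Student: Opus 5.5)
The representation formula has essentially been shown in the discussion preceding the statement, so I would only recap it. Since the sets $X_\ell$ are not nested, I cannot use the cardinality argument of \cref{thm:basis1}. Instead, linear independence will follow from the injectivity of $A_L(X;\cdot)$ viewed as a map $\R^{n_L}\to V_L$, and I plan to obtain that injectivity from the directness of the sum $V_L=W_1\oplus\cdots\oplus W_L$ quoted from \cite{LeGia-etal-10-1}.

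\emph{Representation and spanning.} Each step of the multilevel method depends only on values of the current error on $X_\ell\subseteq Y_L$. By induction, every $e_{\ell-1}|X_\ell$ is a linear function of $f|Y_L$, so $A_L(X;f)$ depends linearly on $f|Y_L$ only. Hence $A_L(X;\cdot)$ is well defined on $\R^{n_L}$. Writing $f|Y_L=\sum_i f(\yy_i)\ee_i$ and using linearity gives $A_L(X;f)=\sum_{i=1}^{n_L} f(\yy_i)b_i$. This shows $\widetilde{V}_L=\operatorname{span}\cB(Y_L)$, so $\dim\widetilde V_L\le n_L$.

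\emph{Injectivity.} Suppose $\yy\in\R^{n_L}$ and $A_L(X;\yy)=0$, and pick any $f\in C(\Omega)$ with $f|Y_L=\yy$. By definition $A_L(X;f)=s_1+\cdots+s_L$ with $s_\ell=I_{X_\ell,\Phi_\ell}(e_{\ell-1})\in W_\ell$. Because the sum of the $W_\ell$ is direct, the vanishing of this sum forces $s_\ell=0$ for every $\ell$. I would then show by induction on $\ell$ that $e_\ell=f$ and $f|X_\ell=0$.
\begin{itemize}
\item Base case: $e_0=f$.
\item Inductive step: if $e_{\ell-1}=f$, then $s_\ell=I_{X_\ell,\Phi_\ell}(f)=0$. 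By uniqueness of the interpolant, with $M_\ell$ positive definite, this gives $f|X_\ell=0$, and then $e_\ell=e_{\ell-1}-s_\ell=f$.
\end{itemize}
Consequently $f$ vanishes on $X_1\cup\cdots\cup X_L=Y_L$, i.e.\ $\yy=0$. So the map is injective and the $b_i$ are linearly independent, which yields $\dim\widetilde V_L=n_L$.

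\emph{Main obstacle.} The crucial input is the directness of $W_1+\cdots+W_L$. It holds under the standing assumptions on the fill distances and scales, citing \cite{LeGia-etal-10-1}, and it is needed in the non-nested setting where points may repeat across levels. Beyond checking that this cited result applies, the rest is routine. Without directness the statement could fail, since cancellations between levels would then be possible.
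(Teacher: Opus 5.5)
Your proposal is correct and matches the paper's proof: spanning comes from linearity and the fact that $A_L(X;\cdot)$ depends only on $f|Y_L$. Linear independence then follows from the directness of $W_1\oplus\cdots\oplus W_L$, cited from \cite{LeGia-etal-10-1} in both arguments, which forces every $s_\ell=0$, hence $e_\ell=f$ and $f|X_\ell=0$ for all $\ell$; your explicit induction on $\ell$ only spells out a step the paper states in one line.
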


\begin{proof}
We already know that $A_L(X;f)$ can be written in the stated form and
that $\cB(Y_L)$ spans $\widetilde{V}_L$. Next, we will show that
$\cB(Y_L)$ is linearly independent.  To this end, assume that 
$
0 =  \sum_{i=1}^{n_L} \alpha_i b_i(\xx)$ for all $ \xx \in \Omega$.

Next, to simplify the notation, we choose an $f\in C(\Omega)$ with $f|Y_L=\aalpha$.
Then, returning to the original form of the multilevel method
with local approximations $s_\ell$ and errors $e_\ell$ we find
\[
0 =  A_L(X;f)(\xx) = \sum_{\ell=1}^L s_\ell(\xx), \qquad \xx\in\Omega.
\]
As the global approximation space is a direct sum of the local ones, $
V_L = W_1 \oplus \cdots \oplus W_L $, we immediately can conclude that
$s_\ell = 0$ for $1\le \ell \le L$, 
which implies $ e_\ell = f - (s_1 + \cdots + s_\ell ) = f $ for all $
1 \leq \ell \leq L $. This, in turn, leads to  
\[
 0 = s_\ell = I_{X_\ell,\Phi_\ell}(e_{\ell-1}) = I_{X_\ell,\Phi_\ell}(f),
\]
showing $ f|_{X_\ell} = 0 $ for all $ 1 \leq j \leq L $. Consequently,
this means $ \aalpha = \00 \in \R^{n_L} $ and hence the linear
independence. of $\cB(Y_L)$.

To see that $\widetilde{V}_L$ is not a
proper subspace of $\operatorname{span}(\cB(Y_L))$, we can proceed
exactly as in the proof of Theorem \ref{thm:basis1}.
\end{proof}

\begin{remark}
Non-nested point sets appear naturally in \emph{adaptive multilevel approximations},
where points at the next level are used if the current residual is larger than a given 
threshold, see \cite{Wendland-17-1}. This has obviously a connection to
a greedy selection of points, 
which is also a possible extension of the multilevel method introduced
in \Cref{sec:MultilevelMethod}. 
\end{remark}

\section{Conclusion and Outlook}\label{sec:Conclusion}
In this paper we studied the structure of multilevel kernel approximation spaces.
Our main contribution is a nodal representation of the multilevel
operator that makes its dependence on the data explicit. This
representation reveals the structure of the associated global
approximation space and allows the construction of basis functions
adapted to the multilevel setting. 

For nested families of data sites we derived both a Lagrange basis and
a Newton-type basis of the multilevel space. These bases provide a
direct description of the range of the multilevel operator and enable
a detailed analysis of structural properties such as localization and
computational complexity. In particular, we established exponential
decay of the Lagrange basis functions, which indicates that the
multilevel representation inherits favorable localization properties
from the underlying kernel. 

We also investigated the more general case of non-nested
sets. Although a cardinal basis is no longer available in this
setting, the ideas developed for the nested case still allow the
construction of a basis for the global approximation space and yield a
characterization of the range of the multilevel operator.  

The nodal representation derived in this work provides a practical
description of the multilevel operator that is suitable for
high-dimensional constructions, for instance when combined with sparse
tensor techniques such as Smolyak-type algorithms. 

Several directions for future research remain open. From a theoretical
perspective it would be desirable to obtain sharper localization and
stability estimates for the basis functions. Another interesting
direction is the analysis of 
adaptive strategies which dynamically generate non-nested data
sites. From a computational viewpoint, the explicit representation of
the multilevel operator may enable the development of efficient
algorithms for high-dimensional approximation based on sparse tensor
constructions and kernel methods. Finally, it would be interesting to
investigate extensions of the present framework to more general
approximation settings, including anisotropic kernels and operator
learning problems.

\bibliographystyle{siamplain}

\begin{thebibliography}{10}

\bibitem{Arcangeli-etal-12-1}
{\sc R.~Arcang\'{e}li, M.~C.~L. de~Silanes, and J.~J. Torrens}, {\em Extension
  of sampling inequalities to {S}obolev semi-norms of fraction order and
  derivative data}, Numer.\ Math., 121 (2012), pp.~587--608.

\bibitem{Avesani-etal-25-1}
{\sc S.~Avesani, R.~Kempf, M.~Multerer, and H.~Wendland}, {\em Multiscale
  scattered data analysis in samplet coordinates}, SIAM Journal on Scientific
  Computing, 47 (2025), pp.~A3038--A3063.

\bibitem{Buhmann-03-1}
{\sc M.~D. Buhmann}, {\em Radial Basis Functions}, Cambridge Monographs on
  Applied and Computational Mathematics, Cambridge University Press, Cambridge,
  2003.

\bibitem{Buettner-etal-25-1}
{\sc M.~B{\"u}ttner, R.~Kempf, and H.~Wendland}, {\em Numerical aspects of the
  tensor product multilevel method for high-dimensional, kernel-based
  reconstruction on sparse grids}, J. Sci. Comput., 106 (2026).

\bibitem{Chen-etal-02-1}
{\sc C.~S. Chen, M.~Ganesh, M.~A. Golberg, and A.~H.-D. Cheng}, {\em Multilevel
  compact radial functions based computational schemes for some elliptic
  problems}, Computers \& Mathematics with Applications, 43 (2002), pp.~359 --
  378.

\bibitem{Cristianini-Shawe-Taylor-00-1}
{\sc N.~Cristianini and J.~Shawe-Taylor}, {\em An introduction to support
  vector machines and other kernel-based learning methods}, Cambridge
  University Press, Cambridge, 2000.

\bibitem{Cucker-Zhou-07-1}
{\sc F.~Cucker and D.~X. Zhou}, {\em Learning Theory: An Approximation Theory
  Viewpoint}, Cambridge Monographs on Applied and Computational Mathematics,
  Cambridge University Press, Cambridge, 2007.

\bibitem{Fasshauer-07-1}
{\sc G.~Fasshauer}, {\em Meshfree Approximation Methods with {MATLAB}}, World
  Scientific Publishers, Singapore, 2007.

\bibitem{Fasshauer-99-1}
{\sc G.~E. Fasshauer}, {\em Solving differential equations with radial basis
  functions: Multilevel methods and smoothing}, Adv.\ Comput.\ Math., 11
  (1999), pp.~139--159.

\bibitem{Floater-Iske-96-1}
{\sc M.~S. Floater and A.~Iske}, {\em Multistep scattered data interpolation
  using compactly supported radial basis functions}, J.\ Comput.\ Appl.\ Math.,
  73 (1996), pp.~65--78.

\bibitem{Fornberg-Flyer-15-1}
{\sc B.~Fornberg and N.~Flyer}, {\em Solving {PDEs} with radial basis
  functions}, in Acta Numerica, A.~Iserles, ed., vol.~24, Cambridge University
  Press, 2015, pp.~215--258.

\bibitem{Franke-Schaback-98-1}
{\sc C.~Franke and R.~Schaback}, {\em Solving partial differential equations by
  collocation using radial basis functions}, Appl.\ Math.\ Comput., 93 (1998),
  pp.~73--82.

\bibitem{Georgoulis-etal-13-1}
{\sc E.~H. Georgoulis, J.~Levesley, and F.~Subhan}, {\em Multilevel sparse
  kernel-based interpolation}, SIAM J.\ Sci. Comput., 35 (2013),
  pp.~A815--A831.

\bibitem{Griebel-etal-26-1}
{\sc M.~Griebel, H.~Harbrecht, and M.~Multerer}, {\em Kernel interpolation on
  generalized sparse grids}, SIAM Journal on Mathematics of Data Science, 8
  (2026), pp.~335--361.

\bibitem{Hales-Levesley-02-1}
{\sc S.~J. Hales and J.~Levesley}, {\em Error estimates for multilevel
  approximation using polyharmonic splines}, Numer.\ Algorithms, 30 (2002),
  pp.~1--10.

\bibitem{Hangelbroek-etal-11-1}
{\sc T.~Hangelbroek, F.~J. Narcowich, X.~Sun, and J.~Ward}, {\em Kernel
  approximation on manifolds {II}: the {$L_\infty$} norm of the {$L_2$}
  projector}, SIAM J.\ Math.\ Anal., 43 (2011), pp.~662--684.

\bibitem{Kempf-Wendland-23-1}
{\sc R.~Kempf and H.~Wendland}, {\em High-dimensional approximation with
  kernel-based multilevel methods on sparse grids}, Numerische Mathematik, 154
  (2023), pp.~485--519.

\bibitem{LeGia-etal-10-1}
{\sc Q.~T. Le~Gia, I.~Sloan, and H.~Wendland}, {\em Multiscale analysis in
  {S}obolev spaces on the sphere}, SIAM J.\ Numer.\ Anal., 48 (2010),
  pp.~2065--2090.

\bibitem{LeGia-etal-12-1}
{\sc Q.~T. Le~Gia, I.~Sloan, and H.~Wendland}, {\em Multiscale {RBF}
  collocation for solving {PDEs} on spheres}, Numer.\ Math., 121 (2012),
  pp.~99--125.

\bibitem{DeMarchi-Wendland-20-1}
{\sc S.~D. Marchi and H.~Wendland}, {\em On the convergence of the rescaled
  localized radial basis function method}, Appl. Math. Lett., 99 (2020).

\bibitem{Narcowich-etal-99-1}
{\sc F.~J. Narcowich, R.~Schaback, and J.~D. Ward}, {\em Multilevel
  interpolation and approximation}, Appl.\ Comput.\ Harmon.\ Anal., 7 (1999),
  pp.~243--261.

\bibitem{Narcowich-etal-05-1}
{\sc F.~J. Narcowich, J.~D. Ward, and H.~Wendland}, {\em {S}obolev bounds on
  functions with scattered zeros, with applications to radial basis function
  surface fitting}, Math.\ Comput., 74 (2005), pp.~643--763.

\bibitem{Narcowich-etal-06-1}
{\sc F.~J. Narcowich, J.~D. Ward, and H.~Wendland}, {\em {S}obolev error
  estimates and a {B}ernstein inequality for scattered data interpolation via
  radial basis functions}, Constr.\ Approx., 24 (2006), pp.~175--186.

\bibitem{Schaback-95-2}
{\sc R.~Schaback}, {\em Error estimates and condition number for radial basis
  function interpolation}, Adv.\ Comput.\ Math., 3 (1995), pp.~251--264.

\bibitem{Schaback-97-3}
{\sc R.~Schaback}, {\em On the efficiency of interpolation by radial basis
  functions}, in Surface Fitting and Multiresolution Methods, A.~L.
  M\'ehaut\'e, C.~Rabut, and L.~L. Schumaker, eds., Nashville, 1997, Vanderbilt
  University Press, pp.~309--318.

\bibitem{Schaback-Wendland-06-1}
{\sc R.~Schaback and H.~Wendland}, {\em Kernel techniques: From machine
  learning to meshless methods}, in Acta Numerica, A.~Iserles, ed., vol.~15,
  Cambridge University Press, 2006, pp.~543--639.

\bibitem{Schoelkopf-Smola-02-1}
{\sc B.~Sch\"olkopf and A.~J. Smola}, {\em Learning with Kernels -- Support
  Vector Machines, Regularization, Optimization, and Beyond}, MIT Press,
  Cambridge, Massachusetts, 2002.

\bibitem{Stein-71-1}
{\sc E.~M. Stein}, {\em Singular Integrals and Differentiability Properties of
  Functions}, Princeton University Press, Princeton, New Jersey, 1971.

\bibitem{Steinwart-Christmann-08-1}
{\sc I.~Steinwart and A.~Christmann}, {\em Support Vector Machines}, Springer,
  New York, 2008.

\bibitem{Usta-Levesley-18-1}
{\sc F.~Usta and J.~Levesley}, {\em Multilevel quasi-interpolation on a sparse
  grid with the {G}aussian}, Numer.\ Algorithms, 77 (2018), pp.~793--808.

\bibitem{Wendland-99-2}
{\sc H.~Wendland}, {\em Meshless {G}alerkin methods using radial basis
  functions}, Math.\ Comput., 68 (1999), pp.~1521--1531.

\bibitem{Wendland-05-1}
{\sc H.~Wendland}, {\em Scattered Data Approximation}, Cambridge Monographs on
  Applied and Computational Mathematics, Cambridge University Press, Cambridge,
  UK, 2005.

\bibitem{Wendland-10-1}
{\sc H.~Wendland}, {\em Multiscale analysis in {S}obolev spaces on bounded
  domains}, Numer.\ Math., 116 (2010), pp.~493--517.

\bibitem{Wendland-17-1}
{\sc H.~Wendland}, {\em Multiscale radial basis functions}, in Frames and Other
  Bases in Abstract and Function Spaces -- Novel Methods in Harmonic Analysis,
  Volume 1, I.~Pesenson, Q.~T.~L. Gia, A.~Mayeli, H.~Mhaskar, and D.-X. Zhou,
  eds., Birkh\"auser, Cham, 2017, pp.~265--299.

\bibitem{Wendland-18-1}
{\sc H.~Wendland}, {\em Solving partial differential equations with multiscale
  radial basis functions}, in Contemporary Computational Mathematics - A
  Celebration of the 80th Birthday of Ian Sloan, J.~Dick, F.~Kuo, and
  H.~Wozniakowski, eds., Springer, Cham, 2018, pp.~1191--1213.

\end{thebibliography}

\end{document}